\documentclass[12pt,authoryear]{elsarticle}
\journal{}

\makeatletter
\def\ps@pprintTitle{%
 \let\@oddhead\@empty
 \let\@evenhead\@empty
 \def\@oddfoot{\hfill\thepage}%
 \let\@evenfoot\@oddfoot}
\makeatother

\usepackage{amsmath,amsfonts,amssymb,amsthm,graphicx}
\providecommand{\doi}[1]{\href{https://doi.org/#1}{DOI:#1}}
\usepackage{xurl} 
\renewcommand{\doi}[1]{%
 \href{https://doi.org/#1}{\nolinkurl{DOI:#1}}%
}

\usepackage{mathtools} 
\usepackage{appendix} 
\usepackage{xcolor} 
\usepackage{enumerate} 
\usepackage{dsfont} 
\usepackage{natbib} 
\usepackage{float} 
\usepackage{hyperref} 
\usepackage{subcaption} 
\usepackage{geometry} 
\theoremstyle{plain}
\newtheorem{theorem}{Theorem}
\newtheorem{proposition}{Proposition}
\newtheorem{lemma}{Lemma}

\theoremstyle{definition}

\newtheorem{remark}{Remark}

\newcommand{\N}{\mathbb{N}}
\newcommand{\R}{\mathbb{R}}
\newcommand{\PP}{\mathsf{P}}
\newcommand{\EE}{\mathsf{E}}
\newcommand{\Var}{\mathsf{Var}}

\newcommand{\rd}{\mathrm{d}}
\newcommand{\ind}{\mathds{1}}
\newcommand{\e}{\varepsilon}
\newcommand{\oo}{\mathrm{o}}
\newcommand{\OO}{\mathcal{O}}
\newcommand{\leqdef}{\vcentcolon=}
\newcommand{\reqdef}{=\vcentcolon}

\begin{document}

\begin{frontmatter}

\title{Minimax properties of gamma kernel density estimators \\ under $L^p$ loss and $\beta$-H\"older smoothness of the target}

\author[a1]{Fr\'ed\'eric Ouimet}\ead{frederic.ouimet2@uqtr.ca}

\address[a1]{3351 Bd des Forges, D\'epartement de math\'ematiques et d'informatique, Universit\'e du Qu\'ebec \`a Trois-Rivi\`eres, Trois-Rivières, QC G8Z 4M3, Camada}

\begin{abstract}
This paper considers the asymptotic behavior in $\beta$-H\"older spaces, and under $L^p$ loss, of the non-modified gamma kernel density estimator introduced by Chen ({\it Annals of the Institute of Statistical Mathematics}, 52, 471--480, 2000) for the analysis of nonnegative data, in the situation where the target may have a finite effective or true upper endpoint but the estimator itself is left untruncated and treats the support as $[0,\infty)$. The finite endpoint is an analytical device in the definition of the function class and the risk, not information supplied to the estimator. The functional classes are chosen so that the target density matches smoothly to zero at the upper endpoint, which isolates the behavior at the origin and avoids additional upper-endpoint leakage bias. The estimator attains the minimax rate for $(p,\beta)\in [1,3)\times(0,2]$ and part of $[3,4)\times(0,2]$, but fails for $p\geq 4$ or $\beta>2$, leaving a small region unresolved.
\end{abstract}

\begin{keyword}
Density estimation \sep gamma kernel \sep \texorpdfstring{$L^p$}{Lp} loss \sep minimax estimation \sep nonnegative data \sep nonparametric estimation.
\MSC[2020]{Primary: 62G07; Secondary: 62G05 \sep 62G20}
\end{keyword}

\end{frontmatter}

\section{Introduction}\label{sec:intro}

\subsection{Overview}

Let $X_1,\ldots,X_n$ be independent and identically distributed (iid) random variables taking values in $[0,\infty)$ with an unknown density $f$. Estimating $f$ nonparametrically is a classical problem, and kernel density estimators (KDEs) in the sense of \citet{Rosenblatt1956} and \citet{Parzen1962} remain a standard tool; see, e.g., \citet{Silverman1986,WandJones1995,ChaconDuong2018}. When the support of $f$ is constrained (for instance, $[0,\infty)$ or $[0,1]$), the use of symmetric kernels in the usual KDE creates a boundary bias near the endpoints, which can significantly deteriorate global $L^p$ risks. A large literature addresses this issue using reflection and pseudodata mechanisms \citep{Schuster1985,CowlingHall1996,KarunamuniAlberts2005}, transformation approaches \citep{MarronRuppert1994}, or various boundary correction techniques \citep{GasserMuller1979,Rice1984,Muller1991,Jones1993,ChengFanMarron1997,ZhangKarunamuniJones1999}.

When one wants to encode support information directly in the estimator, one may instead use \emph{asymmetric} kernels whose support reflects that information and whose shape varies with the evaluation point. Prominent contributions are due to \citet{Chen1999}, who introduced \emph{beta kernel} estimators for densities supported on $[0,1]$, and to \citet{Chen2000Gamma}, who proposed a \emph{gamma kernel} estimator for densities supported on $[0,\infty)$, together with a modified gamma version designed to improve bias behavior near the origin. In addition to density estimation, \citet{Chen2000BetaReg} developed beta-kernel smoothers for regression curves, and \citet{Chen2002LocalLinear} studied local linear regression smoothers based on asymmetric kernels, including gamma-type constructions. Among gamma-type refinements, \citet{HirukawaSakudo2015} introduced a family of generalized gamma kernels that contains Chen's modified gamma kernel as a special case and also yields Weibull and Nakagami-$m$ kernels, while \citet{IgarashiKakizawa2018} further studied generalized gamma kernel density estimators for nonnegative data, allowing positive and negative exponent parameters and developing bias-reduced versions. Other extensions and applications include the semiparametric density estimation approach of \citet{BouezmarniRombouts2009} based on copulas, applications in econometrics \citep{BouezmarniScaillet2005,FernandesGrammig2005,BouezmarniRombouts2010,HirukawaSakudo2016,SongHouZhou2019}, and many other relevant references such as \citet{FernandesMonteiro2005,FauziMaesono2020,IgarashiKakizawa2020,Some2020Bayesian,SomeEtAl2022CombinedGamma,FunkeHirukawa2025Uniform,FunkeHirukawa2025Splicing}.

Some of these techniques have notably been adapted to more complex supports such as the simplex \citep{AitchisonLauder1985,ChaconMateuFiguerasMartinFernandez2011,OuimetTolosanaDelgado2022,BertinGenestKlutchnikoffOuimet2023,GenestOuimet2025,Bouzebda2024,DaayebGenestKhardaniKlutchnikoffOuimet2025,DaayebKhardaniOuimet2025}, product spaces like the unit hypercube or positive orthant \citep{BouezmarniRombouts2009,BouezmarniRombouts2010b,FunkeKawka2015,Hirukawa2018,FunkeHirukawa2025Uniform}, half-spaces \citep{BelzileDesgagneGenestOuimet2025}, and the cone of positive definite matrices \citep{Ouimet2022a,BelzileGenestOuimetRichards2025}. Parallel to these specific developments, general frameworks for multivariate asymmetric kernels (called associated kernels) have been proposed and studied; see, e.g., \citet{KokonendjiSome2018,KokonendjiSome2021,AboubacarKokonendji2025,EsstafaKokonendjiNgo2025} and references therein.

While pointwise asymptotic expansions (bias, variance, limit distributions) for asymmetric kernel density estimators are by now well documented, their \emph{minimax} performance under integrated losses is more delicate. The difficulty stems from the fact that the smoothing induced by such kernels is spatially inhomogeneous, which complicates the control of global risk. In particular, depending on the support of the target density and on the loss under consideration, the variance or higher-order moments of the estimator may fail to be uniformly integrable. This lack of global moment control can prevent attainment of the classical minimax rate even when boundary bias is substantially reduced. This phenomenon was analyzed for beta kernel density estimators by~\citet{BertinKlutchnikoff2011}, who identified regimes of smoothness and loss for which minimax optimality holds or fails. Those conclusions were later extended to Dirichlet kernel density estimators on the simplex by \citet{BertinGenestKlutchnikoffOuimet2023}.

\subsection{Contributions}

The purpose of this paper is to establish parallel minimax and non-minimax results for the (ordinary/non-modified) gamma kernel density estimator introduced by \citet{Chen2000Gamma}. To isolate the boundary behavior at $0$ and to avoid additional tail phenomena in global $L^p$ risks, we work over H\"older-type classes of densities with a finite true or effective upper endpoint. For notational convenience this endpoint is normalized to $1$, but this normalization is only an analytical device: the estimator in \eqref{eq:estimator} does not receive the endpoint as input and continues to smooth on $[0,\infty)$. This setting is meant to describe nonnegative data for which an upper bound exists in the real world, at least effectively on the scale of measurement, but is unknown, ignored, or deliberately not built into the estimator. For such classes, with $\beta$ denoting the H\"older smoothness index and provided the H\"older radius is sufficiently large, the benchmark minimax rate under $L^p$ loss is of order $n^{-\beta/(2\beta + 1)}$; see, e.g., \citet{Tsybakov2009}. Our main results show that, with a suitable bandwidth choice, the gamma kernel density estimator achieves this rate for all $p\in[1,3)$ and $\beta\in (0,2]$, and also for a nontrivial subset of $(p,\beta)$ with $p\in [3,4)$ and $\beta\in (0,2]$; see Theorem~\ref{thm:minimax}. On the other hand, we prove two complementary non-minimaxity statements: the estimator cannot be minimax for any $p\geq 1$ when $\beta\in (2,\infty)$ (see Proposition~\ref{prop:2}), and it cannot be minimax when $p\in [4,\infty)$ even if $\beta\in (0,2]$ (see Proposition~\ref{prop:3}). These conclusions mirror those obtained for beta and Dirichlet kernel density estimators.

\subsection{Outline}

The paper is organized as follows. Section~\ref{sec:definitions} introduces the gamma kernel density estimator, the risk criteria, the definition of $\beta$-smoothness, the corresponding H\"older-type class $\Sigma(\beta,L)$ of densities relevant to our results, and other relevant notational conventions. Section~\ref{sec:practical.scope} discusses the practical scope of the compact-support formulation, including the role of the finite upper-endpoint, the practical meaning of the compatibility built into $\Sigma(\beta,L)$ (see the defining bounds in \eqref{eq:def.Sigma} below and the endpoint implication explained in Remark~\ref{rem:endpoint.matching}), and the cases where two-sided bounded-support procedures are more natural. Section~\ref{sec:main.results} states the main results, i.e., the regions of $(p,\beta)$ for which minimaxity and non-minimaxity are established. Section~\ref{sec:regularity} explains the role of the upper-endpoint compatibility condition through mirrored gamma examples. Section~\ref{sec:future} outlines several directions for future work. Proofs are gathered in Section~\ref{sec:proofs}, with proofs of some technical lemmas relegated to Appendix~\ref{app}.

\section{Definitions and notation}\label{sec:definitions}

Let $X_1,\ldots,X_n$ be a sequence of independent and identically distributed (iid) random variables with an unknown density $f$ supported on $[0,\infty)$. The goal is to estimate $f$ from the sample.

For a smoothing parameter (bandwidth) $b \in (0,\infty)$ and a point $x\in [0,\infty)$, define the gamma kernel
\begin{equation}\label{eq:kernel}
K_b(x,t) \equiv K_{x/b + 1,b}(t) = \frac{t^{x/b} e^{-t/b}}{b^{x/b + 1} \Gamma(x/b + 1)} \ind_{[0,\infty)}(t), \quad t\in [0,\infty),
\end{equation}
where $\Gamma(\cdot)$ is Euler's gamma function. The associated gamma kernel density estimator is
\begin{equation}\label{eq:estimator}
\hat{f}_{n,b}(x) = \frac{1}{n} \sum_{i=1}^n K_{x/b + 1,b}(X_i), \quad x\in [0,\infty).
\end{equation}
The family $\{\hat{f}_{n,b}: b > 0\}$ is indexed by the smoothing parameter $b = b(n)$, which is typically taken to depend on the sample size $n$.

Let $\xi_x$ be a gamma random variable with density $t\mapsto K_{x/b + 1,b}(t)$ on $[0,\infty)$. Then
\[
\EE[\hat{f}_{n,b}(x)] = \int_0^{\infty} K_{x/b + 1,b}(t) f(t) \rd t = \EE[f(\xi_x)].
\]
Moreover, for this parametrization, one has
\[
\EE(\xi_x) = x + b, \quad \Var(\xi_x) = x b + b^2.
\]
Hence, the smoothing induced by $\hat{f}_{n,b}$ is \emph{spatially inhomogeneous}: the typical fluctuation scale of $\xi_x$ around $x$ is of order $\sqrt{xb}$ when $x/b$ is large. This feature makes the choice of the functional class (and the control of global $L^p$ risks) more delicate than in compact-support settings.

Although the estimator is defined on the whole half-line, the minimax classes below consist of densities supported on $[0,1]$. This upper bound is a device for the analysis rather than an input to the estimator. In applying $\hat{f}_{n,b}$, one smooths exactly as on $[0,\infty)$; equivalently, any finite upper endpoint is treated as infinity. The estimator is not truncated or renormalized at $1$, and all kernel mass beyond the true support is kept. This is precisely why the support and differentiability requirements in the definition of $\Sigma(\beta,L)$ below matter: together, they impose the upper-endpoint compatibility discussed in Remark~\ref{rem:endpoint.matching}.

For $p\in [1,\infty)$ and a measurable function $g:[0,\infty)\to \R$, write
\[
\|g\|_p = \left(\int_0^{\infty} |g(x)|^p \rd x\right)^{1/p}.
\]
For an estimator $f_n$ of $f$, define its $L^p$ risk at $f$ by
\[
R_n(f_n,f) = \left\{\EE\big(\|f_n - f\|_p^p\big)\right\}^{1/p},
\]
whenever the expectation exists. For a class of densities $\mathcal{F}$, define the maximal risk
\[
R_n(f_n,\mathcal{F}) = \sup_{f\in \mathcal{F}} R_n(f_n,f),
\]
and the minimax risk
\[
r_n(\mathcal{F}) = \inf_{f_n} R_n(f_n,\mathcal{F}),
\]
where the infimum is over all estimators based on $(X_1,\ldots,X_n)$.

For $\beta\in (0,\infty)$, let
\[
m = \sup\{\ell\in \N_0 : \ell < \beta\},
\]
and define the $\beta$-H\"older class $\Sigma(\beta,L)$ as the set of all densities $f$ supported on $[0,1]$ that are $m$-times differentiable on $(0,\infty)$ (so a jump discontinuity is allowed at $0$ but not at $1$) and such that
\begin{equation}\label{eq:def.Sigma}
\max_{0 \leq k \leq m} \sup_{u\in (0,\infty)}|f^{(k)}(u)| \leq L \quad \text{and} \quad
\sup_{\substack{u,v\in (0,\infty) \\ u\neq v}} \frac{|f^{(m)}(u) - f^{(m)}(v)|}{|u-v|^{\beta-m}} \leq L,
\end{equation}
where $f^{(k)}$ denotes the $k$th derivative of $f$ (with $f^{(0)} = f$).

\begin{remark}
Here, the definition of $\Sigma(\beta,L)$ is stated for densities supported on $[0,1]$ for simplicity. Indeed, a new space, say $\smash{\widetilde{\Sigma}(\beta,L)}$, could be defined analogously for densities supported on $[0,C]$ for $C\in (0,\infty)$. All results in the paper would still be valid after the usual rescaling: if $\tilde{f}$ is supported on $[0,C]$, then $f(u)=C\tilde{f}(Cu)$ is supported on $[0,1]$, and conversely $\tilde{f}(x)=C^{-1}f(x/C)$, with the H\"older constant adjusted by a finite factor depending only on $C$ and $\beta$. This normalization is only a theoretical convention for the class and the risk; the estimator itself remains the untruncated gamma kernel estimator on the positive half-line and does not require the value of $C$.
\end{remark}

The main question is whether the family $\{\hat{f}_{n,b}: b > 0\}$ can achieve the minimax rate over $\Sigma(\beta,L)$ under $L^p$ loss, for an appropriate choice of $b = b_n$.

Throughout the paper, expectation is taken with respect to the joint law of the mutually independent copies $X_1,\ldots,X_n$ of $X$. The notation $u = \OO(v)$ means that $\limsup |u / v| < B < \infty$ as $n \to \infty$ or $b \to 0$, depending on the context. The positive constant $B$ may depend on the risk exponent $p$, the smoothness parameter $\beta$, the Lipschitz constant $L$, and the target density $f$, but not on any other variable unless explicitly written as a subscript. Similarly, throughout the proofs, $c,C\in (0,\infty)$ denote generic positive constants whose values may change from expression to expression and which may depend on $p$, $\beta$, $L$, and $f$, but not on $n$ or $b$. If both $u = \OO(v)$ and $v = \OO(u)$ hold, then one writes $u \asymp v$. Similarly, the notation $u = \oo(v)$ means that $\lim |u / v| = 0$ as $n\to \infty$ or $b\to 0$. Subscripts indicate which parameters the convergence rate can depend on. If $f_n$ is any estimator of $f$, then $\EE[f_n]$ is a shorthand for the map $x\mapsto \EE[f_n(x)]$. The gamma distribution always has the shape/scale parametrization.

\section{Practical scope of the compact-support formulation}\label{sec:practical.scope}

\subsection{What is encoded in the estimator}

The compact-support formulation should be understood as an analytical framework for the risk, not as additional support information supplied to the estimator. The endpoint $1$ is a normalization used to define the class $\Sigma(\beta,L)$ and to carry out the global $L^p$ analysis. The estimator studied throughout the paper is the ordinary untruncated and unrenormalized gamma kernel estimator in \eqref{eq:estimator}, defined on the whole half-line $[0,\infty)$. Consequently, even when the target density has a finite true or effective upper endpoint, the smoothing rule itself treats that endpoint as infinity. This is precisely the situation considered here: the finite endpoint is part of the data-generating mechanism and is used for the risk analysis, but it is not built into the estimator.

This distinction is important in applications. Many nonnegative quantities are bounded in the real world by physical, administrative, technological, biological, or measurement constraints, even though the value of the bound may be unavailable, uncertain, unstable, or irrelevant to the way the estimator is implemented. For example, human body heights are naturally modeled as nonnegative measurements, and although an absolute upper bound exists for biological and physical reasons, that bound is not usually specified or encoded when applying a half-line smoothing method. In such situations one may use a method designed for nonnegative data without specifying a right endpoint. The present results describe when this practice is harmless at the first-order minimax-rate level. They do not assert that gamma kernels are generally preferable for bounded-support problems. Rather, they identify conditions under which the ordinary gamma estimator retains the classical compact-support minimax rate although it ignores the finite upper endpoint.

\subsection{Endpoint compatibility}

The key compatibility requirement comes from the definition of $\Sigma(\beta,L)$: densities are supported on $[0,1]$, while the required derivatives are defined and bounded on $(0,\infty)$. In particular, the target density, and the relevant derivatives when $\beta>1$, must match the zero extension to the right of the finite endpoint (here $1$). Since the gamma kernels in \eqref{eq:estimator} have support on $[0,\infty)$, they can place mass to the right of the finite endpoint. If the density remains positive near the endpoint, or if it has a sharp cutoff there, this mismatch can create an additional leakage contribution to the global risk. The matching condition rules out this behavior by forcing the target density to approach the upper endpoint smoothly enough. Its technical role is detailed in Remark~\ref{rem:endpoint.matching}.

The formal restrictions on the loss exponent $p$ and the smoothness parameter $\beta$ are part of the minimax statement and are therefore left to the main results below in Section~\ref{sec:main.results}. Their practical implication is that the same estimator can behave differently depending on how strongly the loss weights local deviations and how much smoothness one wants to exploit. The remarks in the main results section separate this stochastic issue from the endpoint-compatibility issue.

\subsection{When other bounded-support procedures are preferable}

If the exact two-sided support is truly known, for example, $[0,1]$, and the goal is to encode both endpoints in the estimator, then beta kernels or other compact-support procedures are generally more natural because their support matches the support of the target density. Likewise, if the density is expected to have a sharp upper cutoff or to remain positive at the right endpoint, the ordinary gamma estimator should not be viewed as the default choice. In that case, estimators that use the upper endpoint, such as beta kernels, boundary-corrected compact-support kernels, truncated gamma kernels, or renormalized gamma kernels, are more directly adapted to the problem. Accordingly, the results below should be interpreted as rate-optimality and non-rate-optimality statements for the ordinary half-line gamma estimator when the right endpoint exists but is unknown, ignored, or not built into the smoothing rule, not as a general recommendation to use gamma kernels whenever the support is bounded.

\section{Main results}\label{sec:main.results}

For every fixed $\beta\in (0,\infty)$ and $p\in [1,\infty)$, provided $L$ is sufficiently large depending on $\beta$, the minimax rate over $\Sigma(\beta,L)$ under $L^p$ loss is
\begin{equation}\label{eq:minimax.rate}
r_n\{\Sigma(\beta,L)\} \asymp n^{-\beta/(2\beta + 1)};
\end{equation}
see, e.g., \citet[Theorem~5.1]{IbragimovHasminskii1981} and \citet[Theorem~2.8]{Tsybakov2009}.

The following theorem states a minimax property of the gamma kernel density estimator \eqref{eq:estimator} when the bandwidth is tuned according to the smoothness parameter $\beta$.

\begin{theorem}\label{thm:minimax}
Let $L > 0$ be sufficiently large, depending on $\beta$. Define
\[
\mathcal{S} = \left\{(p,\beta)\in [3,4)\times(0,2] : \frac{p-3}{p-2} < \beta \leq 2\right\}.
\]
Assume that $(p,\beta)\in [1,3)\times(0,2]$ or that $(p,\beta)\in \mathcal{S}$. Let $b_n = c \, n^{-2/(2\beta + 1)}$ for all $n\in \N$ and some constant $c\in (0,\infty)$. Then
\[
\limsup_{n\to \infty} \frac{R_n\{\hat{f}_{n,b_n}, \Sigma(\beta,L)\}}{r_n\{\Sigma(\beta,L)\}} < \infty,
\]
i.e., the sequence $\{\hat{f}_{n,b_n}: n\in \N\}$ achieves the minimax rate over $\Sigma(\beta,L)$ under $L^p$ loss.
\end{theorem}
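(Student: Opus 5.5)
Since the minimax rate \eqref{eq:minimax.rate} is already available, it suffices to show $\sup_{f\in\Sigma(\beta,L)} \EE\|\hat f_{n,b}-f\|_p^p = \OO(n^{-p\beta/(2\beta+1)})$ when $b=b_n$. The plan is to split the integral over $[0,\infty)$ into $[0,2]$ and $(2,\infty)$, and then to handle bias and stochastic error separately through
\[
\EE\|\hat f_{n,b}-f\|_p^p \le 2^{p-1}\Bigl\{\|\EE[\hat f_{n,b}]-f\|_p^p + \EE\|\hat f_{n,b}-\EE[\hat f_{n,b}]\|_p^p\Bigr\}.
\]

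\emph{Bias.} Write $\EE[\hat f_{n,b}(x)]-f(x)=\EE[f(\xi_x)-f(x)]$. Here $f$ is viewed as a function on $(0,\infty)$ that is $\beta$-H\"older there, including across the point $1$, which is what the endpoint-matching built into \eqref{eq:def.Sigma} guarantees.

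For $\beta\le1$ the bias is at most $L\,\EE|\xi_x-x|^\beta\le L(xb+2b^2)^{\beta/2}\le C b^{\beta/2}$ uniformly for $x\le 2$.

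For $\beta\in(1,2]$, Taylor-expand to get
\[
f(\xi_x)-f(x)=f'(x)(\xi_x-x)+\OO(|\xi_x-x|^\beta).
\]
Since $\EE(\xi_x-x)=b$ and $|f'|\le L$, the bias is at most $Lb+C(xb+b^2)^{\beta/2}=\OO(b^{\beta/2})$.

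The jump of $f$ at $0$ does no harm, because $\xi_x$ lives on $(0,\infty)$ where $f$ is smooth. For $x>2$ we have $f(x)=0$, and $\EE f(\xi_x)\le L\,\PP(\xi_x\le1)$. A Chernoff bound for the gamma law gives $\PP(\xi_x\le 1)\le e^{-cx/b}$, so this region contributes a negligible amount. Altogether $\|\EE[\hat f_{n,b}]-f\|_p^p=\OO(b^{p\beta/2})=\OO(n^{-p\beta/(2\beta+1)})$.

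\emph{Stochastic term.} Set $Y_i=K_b(x,X_i)-\EE K_b(x,X_i)$. Rosenthal's inequality gives
\[
\EE|\hat f_{n,b}(x)-\EE\hat f_{n,b}(x)|^p\le C\{n^{-p/2}(\EE K_b(x,X)^2)^{p/2}+n^{1-p}\EE K_b(x,X)^p\}.
\]
The kernel moments follow from the gamma integral identity
\[
\int K_b(x,t)^q\,\rd t = \frac{\Gamma(qx/b+1)\,b^{1-q}}{q^{qx/b+1}\Gamma(x/b+1)^q},
\]
together with Stirling's formula and $f\le L$. This yields $\EE K_b(x,X)^q\le C\, b^{1-q}\min\{(x/b)^{-(q-1)/2},1\}$, i.e.\ $\asymp b^{-(q-1)/2}x^{-(q-1)/2}$ for $x\ge b$ and $\asymp b^{1-q}$ for $x<b$. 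For $x\ge 2$, the support of $f$ in $[0,1]$ makes $\EE K_b(x,X)^q$ exponentially small in $x/b$, so that region is negligible.

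Integrating over $[0,2]$, the first Rosenthal term gives $n^{-p/2}\int_0^2 (bx)^{-p/4}\,\rd x$ plus the $[0,b]$ piece $n^{-p/2}b^{1-p/2}$. The integral is finite exactly when $p<4$, and then $n^{-p/2}b^{-p/4}=n^{-p\beta/(2\beta+1)}$ for the chosen $b$. The boundary piece has order $(nb)^{-p/2}b$, which is at most $(nb^{1/2})^{-p/2}$ whenever $b^{1-p/4}\lesssim1$, again true for $p<4$.

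\emph{Main obstacle.} The hard part is the second Rosenthal term, $n^{1-p}\int_0^2\EE K_b(x,X)^p\,\rd x$. The region $x\ge b$ contributes $n^{1-p}b^{-(p-1)/2}\int_b^2 x^{-(p-1)/2}\,\rd x$, which behaves as follows:
\begin{itemize}
\item for $p<3$ it is of order $n^{1-p}b^{-(p-1)/2}$;
\item for $p=3$ it carries an extra logarithm;
\item for $p>3$ it is of order $n^{1-p}b^{2-p}$, and the region $x<b$ gives the same order.
\end{itemize}
Each must be compared with $n^{-p\beta/(2\beta+1)}$, using $b=cn^{-2/(2\beta+1)}$.

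For $p<3$, the exponent $1-p+(p-1)/(2\beta+1)$ must be at most $-p\beta/(2\beta+1)$. This reduces to $(1-p)2\beta\le -p\beta$, i.e.\ $\beta(2-p)\le 0$. That holds for $p\ge2$, and for $p<2$ the Rosenthal first term alone suffices (use Jensen, $\EE|\cdot|^p\le(\EE|\cdot|^2)^{p/2}$).

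For $p\in[3,4)$, we need $1-p+2(p-2)/(2\beta+1)\le -p\beta/(2\beta+1)$. This rearranges to $(2\beta+1)(1-p)+2p-4+p\beta\le0$, i.e.\ $\beta(2-p)\le 3-p$, i.e.\ $\beta\ge (p-3)/(p-2)$. The strict inequality in $\mathcal S$ absorbs the logarithm at the boundary case.

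I expect the delicate part to be keeping the Stirling-based kernel-moment bounds uniform in $x\in(0,2]$, including near $x\asymp b$, and uniform over $f\in\Sigma(\beta,L)$. Once those bounds are in hand, the rest is the exponent bookkeeping above.
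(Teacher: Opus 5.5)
Your proof is correct and follows the paper's overall architecture: a bias--variance split, H\"older/Taylor bias of order $b^{\beta/2}$ on a compact range of $x$, exponentially small contributions far beyond the support, and Rosenthal's inequality for the stochastic term. Where you differ is the decisive higher-order Rosenthal term.

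The paper uses the bounded-summand form $(M/n)^{p-2}\sigma^2/n$ with $M_b(x)\le C b^{-1/2}(x+b)^{-1/2}$. It then interpolates this with $M_b(x)\le C b^{-1}$ through a parameter $q$, so that the $x$-integral $\int_0^1 (x+b)^{-\{(p-2)q+1\}/2}\,\rd x$ stays bounded uniformly in $b$. That forces $q<1/(p-2)$. Domination at $b_n$ then requires $q>1-\beta$, and this is the source of the strict condition $\beta>(p-3)/(p-2)$.

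You instead bound $n^{1-p}\EE K_b(x,X)^p$ through the exact gamma-integral identity and integrate the singular profile directly. For $p>3$ this gives $\int_b^2 x^{-(p-1)/2}\,\rd x\asymp b^{(3-p)/2}$, hence $n^{1-p}b^{2-p}$ with no loss.

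The two pointwise bounds are of the same order, since $\int K_b(x,t)^p\,\rd t\le \|K_b(x,\cdot)\|_\infty^{p-2}\int K_b(x,t)^2\,\rd t$. So your gain comes from accepting a $b$-dependent $x$-integral, not from the closed form. In exchange, the paper's version needs only $\|K_b(x,\cdot)\|_\infty$ and $\int K_b(x,t)^2\,\rd t$. That is what one would verify for kernels lacking a closed-form $p$th-power integral, such as the generalized gamma kernels mentioned in the paper's outlook.

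One remark in your bookkeeping should be corrected, and the correction works in your favour. A logarithm appears only at $p=3$. There your requirement $\beta(2-p)\le 3-p$ reads $-\beta\le 0$, and it holds with a polynomial margin $n^{-\beta/(2\beta+1)}$ that absorbs $\ln n$ for every $\beta>0$. For $p\in(3,4)$ no logarithm arises, so the strict inequality is not needed to absorb anything. Your argument actually yields the closed condition $\beta\ge(p-3)/(p-2)$.

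Your route therefore proves slightly more than the stated theorem: the minimax rate also holds on the curve $\beta=(p-3)/(p-2)$, $3<p<4$. The paper places that curve inside its unresolved region.
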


\begin{remark}\label{rem:endpoint.matching}
The somewhat restrictive definition of $\Sigma(\beta,L)$ (smoothness on $(0,\infty)$ even though $\mathrm{supp}(f)\subseteq[0,1]$) implicitly enforces a compatibility condition at the upper endpoint $x=1$: since $f(x)=0$ for $x>1$ and the derivatives are assumed to exist and be bounded on $(0,\infty)$, one necessarily has $f(1)=0$ and, when $\beta>1$, also $f'(1)=0$, etc. This condition should be viewed as the compatibility needed to study a half-line gamma smoother on a compact risk class, not as support information available to the smoother. It prevents an additional ``endpoint leakage'' bias coming from the fact that the gamma kernel has support on $[0,\infty)$ and is not truncated at~$1$.

To see what would happen without this restriction, imagine working instead with the more standard H\"older-type class, say $\Sigma^\circ(\beta,L)$, of densities supported on $[0,1]$ that are $\beta$-H\"older on $(0,1)$ but with \emph{no} matching condition at $x=1$ (so that, for instance, $f(1^-)$ may be strictly positive). In Step~4 of the proof of Theorem~\ref{thm:minimax}, H\"older regularity and, when $\beta > 1$, a first-order Taylor expansion yield the bound $|\EE[f(\xi_x)] - f(x)| = \OO(b^{\beta/2})$ for $\beta\in (0,2]$ under the upper-endpoint compatibility condition. If $f$ is allowed to have a nonzero left limit at $1$, then for $x<1$ close to $1$, we must also account for the event $\{\xi_x>1\}$, on which $f(\xi_x)=0$ while $f(x)$ can be of order one. A simple decomposition is
\[
|\EE[f(\xi_x)] - f(x)| \leq \EE\left(|f(\xi_x) - f(x)| \ind_{\{\xi_x\leq 1\}}\right) + f(x) \, \PP(\xi_x > 1).
\]
For $x < 1$, the first term is $\OO(b^{(\beta \wedge 1)/2})$, whereas the second term can be of constant order near $1$. When $\beta > 1$, the sharper bound in Step~4 uses cancellation of the linear Taylor term in the full expectation, which is lost in the absolute-value bound above. For $x>1$, one has $f(x)=0$, and the first term itself can be of constant order. Thus, when $x$ lies within the kernel's typical fluctuation scale of the endpoint, i.e. $|1-x| = \OO(b^{1/2})$, the pointwise bias can be $\asymp 1$ on an interval of length $\asymp b^{1/2}$ around $1$ (on both sides of~$1$ when we integrate over $[0,\infty)$). This yields an additional integrated bias contribution of order
\[
\left(\int_{1 - c \, b^{1/2}}^{1 + c \, b^{1/2}} |\EE[\hat f_{n,b}(x)] - f(x)|^p \, \rd x\right)^{1/p} \asymp b^{1/(2p)}.
\]
For $\beta\in (0,2]$, the endpoint contribution $b^{1/(2p)}$ is negligible relative to $b^{\beta/2}$ when $\beta < 1/p$, has the same order when $\beta = 1/p$, and dominates it when $\beta > 1/p$. These comparisons concern the bias only. Conclusions about the risk additionally require control of the stochastic term at the chosen bandwidth, including the higher-moment contribution near the origin. Thus, this bias comparison alone does not establish an optimal bandwidth or risk rate over $\Sigma^\circ(\beta,L)$.
\end{remark}

The following results identify regimes of the loss exponent $p$ and smoothness parameter $\beta$ for which the gamma kernel density estimator $\hat{f}_{n,b}$ defined in~\eqref{eq:estimator} cannot achieve the minimax rate over the $\beta$-H\"older class $\Sigma(\beta,L)$, irrespective of the choice of bandwidth sequence $(b_n)_{n\in \N}$.

\begin{proposition}\label{prop:2}
Let $p\in [1,\infty)$ and $\beta\in (2,\infty)$ be given. There exists $L > 1$ such that for every bandwidth sequence $(b_n)_{n\in \N}\subseteq (0,1)$, the family $\{\hat{f}_{n,b_n}:n\in \N\}$ satisfies
\[
\liminf_{n\to \infty}\frac{R_n\{\hat{f}_{n,b_n},\Sigma(\beta,L)\}}{r_n\{\Sigma(\beta,L)\}} = + \infty.
\]
\end{proposition}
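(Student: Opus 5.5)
Since the maximal risk over $\Sigma(\beta,L)$ dominates the risk at any single density in the class, the plan is to exhibit one fixed density $f_0\in\Sigma(\beta,L)$, smooth of all orders on $(0,\infty)$, at which the gamma estimator has a second-order bias of exact order $b$. Then we show that for every bandwidth sequence,
\[
R_n(\hat f_{n,b_n},f_0)\gtrsim b_n + (nb_n^{1/2})^{-1/2}.
\]
Minimizing the right-hand side over $b_n$ gives the order $n^{-2/5}$. Because $\beta>2$ implies $2/5<\beta/(2\beta+1)$, the ratio to $r_n\{\Sigma(\beta,L)\}\asymp n^{-\beta/(2\beta+1)}$ in \eqref{eq:minimax.rate} diverges. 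The constant $L$ is chosen large enough that $f_0\in\Sigma(\beta,L)$ and that \eqref{eq:minimax.rate} applies.

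\textbf{Choice of $f_0$ and the bias lower bound.} Take $f_0(x)=c_0\,x^{k}(1-x)^{k}\ind_{[0,1]}(x)$ with $k>\beta+1$, say $k=\lceil\beta\rceil+2$. This function is $C^{m+1}$ on $(0,\infty)$ with all derivatives up to order $m$ vanishing at $1$, so it lies in $\Sigma(\beta,L)$ for $L$ large. On a fixed interior interval $I=[a_1,a_2]\subset(0,1)$ where $|f_0''|\ge c>0$ (take any interval avoiding the two inflection points), expand $f_0(\xi_x)$ to third order around $x$. Using $\EE(\xi_x-x)=b$, $\EE(\xi_x-x)^2=xb+2b^2$, and $\EE|\xi_x-x|^3=\OO(b^{3/2})$ uniformly in $x\in I$, and noting that the contribution of $\{\xi_x>1\}$ is exponentially small since $f_0$ is globally bounded and smooth, we obtain
\[
\EE[\hat f_{n,b}(x)]-f_0(x)= b\{f_0'(x)+\tfrac{x}{2}f_0''(x)\}+\OO(b^{3/2}).
\]
We choose $I$ so that $g(x)=f_0'(x)+\tfrac{x}{2}f_0''(x)$ does not vanish on it. This is possible because $g$ is a nonzero polynomial. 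Hence $\|\EE\hat f_{n,b}-f_0\|_{L^p(I)}\ge c\,b$ for small $b$. If instead $b_n\not\to0$ along a subsequence, the bias is bounded below by a constant, because $\EE\hat f_{n,b}$ is then a nondegenerate smoothing of $f_0$ that cannot equal $f_0$. That case trivially gives divergence.

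\textbf{Variance lower bound.} For $x\in I$, standard gamma kernel computations (Stirling's formula) give
\[
\Var(\hat f_{n,b}(x))=n^{-1}\bigl\{\EE K_b(x,X)^2-(\EE K_b(x,X))^2\bigr\}\asymp (nb^{1/2})^{-1}f_0(x)/(2\sqrt{\pi x}),
\]
provided $nb^{1/2}\to\infty$. To convert this into an $L^p$ lower bound we use
\[
\EE\|\hat f-f_0\|_p^p\ge |I|^{1-p/2}\Bigl(\int_I\EE|\hat f-f_0|^2\Bigr)^{p/2}
\]
by Hölder/Jensen for $p\ge2$. For $p<2$ we instead apply a Lindeberg central limit theorem pointwise, uniformly on $I$, to obtain $\EE|\hat f(x)-\EE\hat f(x)|^p\ge c\,(nb^{1/2})^{-p/2}$. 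Combining the bias and variance through $\EE|Y|^p\ge c\,\max\{|\EE Y|^p,\EE|Y-\EE Y|^p\}$, which holds by the triangle inequality and Jensen, yields
\[
R_n\ge c\max\{b_n,(nb_n^{1/2})^{-1/2}\}\ge c\,n^{-2/5}.
\]
If $nb_n^{1/2}$ stays bounded along a subsequence, the stochastic term is of constant order along it, and the ratio again diverges.

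\textbf{Main obstacle.} I expect the main work to be the uniform-in-$x\in I$ lower bound on the centred $p$-th moment for $p<2$: the Lindeberg condition must be checked using $\sup_t K_b(x,t)=\OO(b^{-1/2})$ together with $nb^{1/2}\to\infty$. The third-order Taylor remainder control for the gamma moments is routine.
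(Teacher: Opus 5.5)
\textbf{Overall.} Your route is sound and genuinely different from the paper's. There is one step you assert rather than prove, and it needs an additional argument.

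\textbf{Comparison with the paper.} The paper uses two test densities:
\begin{itemize}
\item the smoothed uniform $\widetilde{f}_0$, for the fluctuation;
\item the smoothed linear density $\widetilde{f}_3$, for the bias.
\end{itemize}
The bias of $\widetilde{f}_3$ is computed exactly from $\EE(\xi_x)=x+b$, with only Chernoff tail bounds and no Taylor expansion. Every $L^p$ loss is reduced to $L^1([0,1])$ via \eqref{eq:Lp-to-L1}, and the fluctuation is bounded below by Paley--Zygmund. The regime where $nb_n^{1/2}$ stays bounded is treated separately: on the event that no observation falls within $\delta_n b_n^{1/2}$ of $x$, the estimator is small. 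This only yields the subpolynomial bound $\exp\{-\widetilde{C}s_n\sqrt{\ln(n/s_n)}\}$.

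You instead use the single density $c_0x^k(1-x)^k\ind_{[0,1]}$. This buys two things: membership in $\Sigma(\beta,L)$ is automatic, with no mollification near $1$, and you get the stronger conclusion that the rate fails at one fixed member of the class. The price is the third-order Taylor expansion giving the bias $b\{f_0'(x)+\tfrac{x}{2}f_0''(x)\}+\OO(b^{3/2})$, which is routine since $f_0$ is $C^4$ on $(0,\infty)$.

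\textbf{The unproved step.} You claim that when $nb_n^{1/2}$ stays bounded, \emph{the stochastic term is of constant order}.
\begin{itemize}
\item For $p\geq 2$ this follows from Jensen and your variance asymptotics. Those asymptotics need only $b\to 0$, so the proviso $nb^{1/2}\to\infty$ you attached to them is unnecessary.
\item For $p\in[1,2)$ your only tool is the Lindeberg CLT, and it is unavailable exactly here. The ratio $M_x/(\sqrt{n}\,\sigma_x)\asymp (nb^{1/2})^{-1/2}$ does not tend to $0$, and the sum is essentially compound Poisson rather than Gaussian.
\end{itemize}

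\textbf{A fix that also removes the CLT.} This handles what you identified as the main obstacle. Let $Z=Z(x)$ denote the centred estimator $\hat f_{n,b}(x)-\EE[\hat f_{n,b}(x)]$. By H\"older, $\EE|Z|\geq (\EE Z^2)^{3/2}/(\EE Z^4)^{1/2}$. Independence and centring give
\[
\EE Z^4\leq M_x^2\sigma_x^2/n^3+3\sigma_x^4/n^2 .
\]
On $I$ one has $\sigma_x^2\asymp b^{-1/2}$ and $M_x\leq Cb^{-1/2}$, so uniformly over $x\in I$, all $n$, and all small $b$,
\[
\EE|Z(x)|\geq c\,(1+nb^{1/2})^{-1/2}.
\]
Then $\EE|Z|^p\geq(\EE|Z|)^p$ covers every $p\geq 1$ and both regimes in one stroke.

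\textbf{The case $b_n\not\to 0$.} You only assert positivity of the limiting bias, as does the paper. The quickest justification is that for $x\in(1,2)$ one has $f_0(x)=0$, while $\int_0^1K_{b_\star}(x,t)f_0(t)\,\rd t>0$.
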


\begin{proposition}\label{prop:3}
Let $p\in [4,\infty)$ and $\beta\in (0,2]$ be given. There exists $L > 1$ such that for every bandwidth sequence $(b_n)_{n\in \N}\subseteq (0,1)$, the family $\{\hat{f}_{n,b_n}:n\in \N\}$ satisfies
\[
\liminf_{n\to \infty} \frac{R_n\{\hat{f}_{n,b_n},\Sigma(\beta,L)\}}{r_n\{\Sigma(\beta,L)\}} = + \infty.
\]
\end{proposition}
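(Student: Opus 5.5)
The plan is to show that, for $p\ge 4$, no bandwidth can control bias and stochastic error at the same time. The cause is the non-integrable blow-up of the pointwise variance near the origin, in the spirit of \citet{BertinKlutchnikoff2011}. Write $\hat f=\hat f_{n,b}$ with $b=b_n$. By Minkowski's inequality,
\[
R_n(\hat f,f)\ge \max\Big\{\|\EE[\hat f]-f\|_p,\ \big(\EE\|\hat f-\EE[\hat f]\|_p^p\big)^{1/p}-\|\EE[\hat f]-f\|_p\Big\},
\]
and the first entry uses Jensen's inequality. Hence it suffices to find, for each $b\in(0,1)$, a density $f_b\in\Sigma(\beta,L)$ satisfying two conditions:
\begin{itemize}
\item[(a)] the bias satisfies $\|\EE[\hat f]-f_b\|_p\ge c\,b^{\beta/2}$;
\item[(b)] the stochastic term satisfies $\EE\|\hat f-\EE[\hat f]\|_p^p\ge c\int_b^{\delta}\{n\sqrt{xb}\}^{-p/2}\rd x$.
\end{itemize}
Taking the better of the two lower bounds then gives $R_n\ge c\max\{b^{\beta/2},\,S_n(b)\}$ up to harmless constants, where $S_n(b)$ is the $p$-th root of the integral in (b).

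\textbf{Stochastic lower bound.} Fix a base density $f_0\in\Sigma(\beta,L/2)$ with $f_0\ge c_0>0$ on $[0,\delta]$ and vanishing smoothly at $1$. Since $p\ge2$, Jensen's inequality gives pointwise
\[
\EE|\hat f(x)-\EE\hat f(x)|^p\ge \{\Var \hat f(x)\}^{p/2},
\]
with
\[
\Var\hat f(x)=n^{-1}\big(\EE[K_b(x,X)^2]-\{\EE K_b(x,X)\}^2\big).
\]
Write $K_b(x,t)^2=A_b(x)\,K_{2x/b+1,b/2}(t)$ with
\[
A_b(x)=\frac{\Gamma(2x/b+1)}{2^{2x/b+1}\,b\,\Gamma(x/b+1)^2}.
\]
Stirling's formula gives $A_b(x)\ge c/\sqrt{xb}$ uniformly for $x\in[b,\delta]$. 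The gamma law $K_{2x/b+1,b/2}$ has mean about $x$ and fluctuation scale about $\sqrt{xb}$, so it puts mass at least $1/2$ on $[0,2\delta]$ for $x\le\delta$. Combined with $f\ge c_0$ there, this yields $\EE[K_b^2]\ge c/\sqrt{xb}$. The squared-mean term is $O(1)$, so for small $\delta$ it is dominated and $\Var\hat f(x)\ge c/(n\sqrt{xb})$. Integrating over $[b,\delta]$:
\begin{itemize}
\item for $p=4$, $S_n(b)\ge c\,n^{-1/2}b^{-1/4}\{\log(1/b)\}^{1/4}$;
\item for $p>4$, $S_n(b)\ge c\,n^{-1/2}b^{1/p-1/2}$.
\end{itemize}

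\textbf{Bias lower bound.} I will take
\[
f_b=f_0+b^{\beta/2}\sum_j \varphi\big((x-x_j)/\sqrt b\big),
\]
where $\varphi$ is a fixed smooth zero-mean bump and the $x_j$ are spaced by $\asymp\sqrt b$ in $[1/4,3/4]$. This $f_b$ stays in $\Sigma(\beta,L)$ for $L$ large, is a density, and still satisfies $f_b\ge c_0$ near $0$, so the variance bound above applies to it. At these points $\xi_x$ has standard deviation $\asymp\sqrt b$, the same scale as the bumps. A Gaussian approximation of $\xi_x$ therefore shows that $\EE[f_b(\xi_x)]-f_b(x)$ differs from the bump-free bias, which is $O(b)$, by a nonvanishing smoothed residual of size $\asymp b^{\beta/2}$ on a fixed fraction of each period. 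For $\beta=2$ one may instead use $f_0$ alone, since its bias is $b\{f_0'(x)+xf_0''(x)/2\}$. This gives $\|\EE\hat f-f_b\|_p\ge c\,b^{\beta/2}$.

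\textbf{Conclusion and main obstacle.} With the rate $r_n\asymp n^{-\beta/(2\beta+1)}$ from \eqref{eq:minimax.rate}, it remains to compare $\max\{b^{\beta/2},S_n(b)\}$ with $r_n$ uniformly in $b$. If $b\ge n^{-2/(2\beta+1)}\omega_n$ with $\omega_n\to\infty$, the bias term alone is $\gg r_n$. Otherwise $b\le n^{-2/(2\beta+1)}\omega_n$, and then:
\begin{itemize}
\item for $p>4$, $S_n(b)/r_n\ge n^{(1/2-2/p)/(2\beta+1)}\omega_n^{1/p-1/2}$, which diverges for slowly growing $\omega_n$;
\item for $p=4$, the factor $\{\log(1/b)\}^{1/4}\ge c(\log n)^{1/4}$ beats a choice such as $\omega_n=(\log n)^{1/(4\beta)}$ on the bias side.
\end{itemize}
This handles every sequence $(b_n)$, so the liminf is $+\infty$. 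I expect the main obstacle to be the bias lower bound (a) for $\beta<2$: one needs a genuine lower bound on $|\EE[f_b(\xi_x)]-f_b(x)|$ on a set of positive measure, uniformly in $b$. This requires a quantitative local CLT for the gamma kernel so that the smoothed bump does not cancel. A secondary point is keeping the constants uniform in the Stirling bound near $x\asymp b$.
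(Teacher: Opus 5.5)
Your architecture matches the paper's proof. Both use a pointwise Jensen bound $\{\Var\hat f_{n,b}(x)\}^{p/2}$ with $\Var\hat f_{n,b}(x)\ge c\,n^{-1}b^{-1/2}x^{-1/2}$ on $[b,1/2]$ (Lemma~\ref{lem:nonminimax-var}); your $A_b(x)$ is exactly $B_b(x)$ in \eqref{eq:Bb.def}. Both use a bias lower bound $c\,b^{\beta/2}$ from bumps of width $\asymp b^{1/2}$ and height $\asymp b^{\beta/2}$ (Lemma~\ref{lem:nonminimax-bias}), and then minimize over $b$. Using a single density for both terms via $R_n\ge\max\{B,A/2\}$, rather than the paper's average over $\widetilde f_0$ and $\widetilde f_{\beta,b}$, is a harmless variant.

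The real difference is the bias step, which you leave unproved. It is fillable, and much lighter than you expect: no local CLT is needed. The paper takes the base density flat ($\widetilde f_0\equiv1$ on $[0,7/8]$), so the base bias is exponentially small there; this also removes your separate $\beta=2$ case. It then compares directly at the peak of each positive bump:
\begin{itemize}
\item for $x$ within $\e b^{1/2}$ of the peak, $f(x)-f(u)\ge c\,b^{\beta/2}$ on the annulus $b^{1/2}\le|u-t^{(k)}|\le2b^{1/2}$;
\item the kernel puts mass $\ge c$ on that annulus, by Lemma~\ref{lem:K-local-lower} and unimodality;
\item the positive part $\int_{\{f(u)\ge f(x)\}}K_b(x,u)\{f(u)-f(x)\}\,\rd u$ is at most $H_b\e^2/3$, since $f(x)$ is within $H_b\e^2/3$ of the global maximum.
\end{itemize}
Taking $\e$ small gives $|\EE f(\xi_x)-f(x)|\ge(c/2)\,b^{\beta/2}$ pointwise. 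If you keep your Gaussian route, you need two things. First, uniform-in-$x\in[1/3,2/3]$ convergence of $(\xi_x-x)/b^{1/2}$ to $N(0,x)$ in Wasserstein-1 distance; this suffices because the rescaled bump sum is a fixed Lipschitz periodic function $\Phi$. Second, you need $G_x*\Phi-\Phi\not\equiv0$, where $G_x$ is the $N(0,x)$ density. This holds because Gaussian convolution multiplies every nonzero Fourier coefficient of the zero-mean periodic $\Phi$ by a factor strictly less than $1$. Your sketch asserts the residual is ``nonvanishing'' without this justification.

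Two smaller points need fixing.
\begin{itemize}
\item Your bound (a) is claimed for all $b\in(0,1)$, but the bump construction and any asymptotic argument only work for $b\le b_\ast$. For $b\in[b_\ast,1)$ you need a separate bound. The paper uses $\EE_{\widetilde f_0}\hat f_{n,b}(x)\ge\int_{1/4}^{1/2}K_b(x,t)\,\rd t\ge c$ for $x\in[3/2,2]$, where $\widetilde f_0=0$, uniformly in $b$ by compactness.
\item For $p=4$, the choice $\omega_n=(\log n)^{1/(4\beta)}$ fails when $\beta\le1/4$. In that case your stochastic bound only gives $S_n/r_n\gtrsim(\log n)^{1/4-1/(16\beta)}$, which does not diverge. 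Any $\omega_n=(\log n)^a$ with $a\in(0,1)$ works. The choice $a=1/(2\beta+1)$ balances the two sides and recovers the paper's gain $(\ln n)^{\beta/(2(2\beta+1))}$.
\end{itemize}
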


\begin{remark}
The practical message of Theorem~\ref{thm:minimax} is conditional and should be separated from the case of a fully known compact support. The theorem does not say that the gamma kernel estimator is the preferred procedure whenever the support is bounded. If the support is truly known to be $[0,1]$ and this information should be encoded in the estimator, beta kernels are often the natural asymmetric-kernel choice because they match both endpoints. The value of the present result is instead to describe the ordinary gamma estimator on $[0,\infty)$ for nonnegative data whose true or effective support is finite, but whose upper endpoint is unknown, ignored, or treated as infinity by the smoothing procedure. In this sense, the finite endpoint is a tool for the risk analysis rather than an input to the estimator. For losses with $p<3$, the minimax condition covers the full range $\beta\in(0,2]$, and for $p\in[3,4)$ it covers the smoother part of the range, namely $\beta>(p-3)/(p-2)$. Thus the restriction is mild for the most common integrated losses, but it becomes more stringent as one gives more weight to large local deviations. The theorem says that, when the density tapers smoothly enough at the unseen upper bound, ignoring that bound does not change the first-order minimax rate in those regions. When the density remains positive, has a sharp cutoff, or has a nonzero slope at the upper endpoint, the endpoint-leakage term mentioned in the previous remark can dominate the usual smoothing bias; in that case, beta kernels, truncated or renormalized gamma kernels, or other compact-support procedures are preferable if the bounded support is known. Likewise, when $\beta > 2$ and the goal is to exploit the faster corresponding minimax rate, Proposition~\ref{prop:2} suggests that the ordinary gamma kernel should be replaced by a method designed to exploit higher smoothness, such as higher-order bias-corrected asymmetric kernel estimators on the semi-infinite interval \citep[e.g.,][]{IgarashiKakizawa2020}, bias-reduced generalized gamma kernels \citep[e.g.,][]{IgarashiKakizawa2018}, or local polynomial density estimators with automatic boundary adaptation \citep[e.g.,][]{CattaneoJanssonMa2020}, rather than used only through the slower guarantee associated with a smaller smoothness index.
\end{remark}

\begin{remark}
Theorem~\ref{thm:minimax} and Propositions~\ref{prop:2} and~\ref{prop:3} leave open the region
\[
\left\{(p,\beta): 3 < p < 4,\ 0 < \beta \leq \frac{p-3}{p-2}\right\}.
\]
This is a genuine limitation of the present analysis rather than a statement that the gamma kernel density estimator is, or is not, minimax in that region. In the proof of Theorem~\ref{thm:minimax}, the condition $\beta>(p-3)/(p-2)$ is used to dominate the first Rosenthal term in the integrated stochastic bound near the origin. By contrast, the available non-minimax lower bounds cover different regimes: Proposition~\ref{prop:2} treats all $p\in[1,\infty)$ when $\beta>2$, while Proposition~\ref{prop:3} treats $p\geq 4$ when $\beta\in(0,2]$. Settling the remaining region would require either sharper integrated moment bounds for the gamma kernel estimator or a new lower-bound argument tailored to this intermediate range.
\end{remark}

\section{Illustration of the upper-endpoint smoothness matching condition}\label{sec:regularity}

The purpose of this section is illustrative rather than prescriptive. We do not propose the family in \eqref{eq:mirrored-gamma} below as a general modeling class for bounded data, nor do we suggest that gamma kernels are preferable to beta kernels when the two endpoints are known. Instead, we use it to make the upper-endpoint smoothness matching condition in the definition of $\Sigma(\beta,L)$ visible on an explicit family. The endpoint $x=1$ should again be read as a theoretical device that makes a true or effective upper bound visible, while the estimator itself treats the right endpoint as infinity. Many bounded-support families, including beta-type densities with sufficiently high powers at the endpoints, could be checked in a similar way. We focus on mirrored gamma densities because the paper concerns gamma-kernel smoothing, because the behavior near the upper endpoint is governed by a single familiar shape parameter, and because the example directly displays the compatibility constraint created by applying a kernel supported on $[0,\infty)$ to a target supported on $[0,1]$.

The critical technical condition under which Theorem~\ref{thm:minimax} is established is the assumption that the underlying univariate density $f$ belongs to the $\beta$-H\"older space $\Sigma(\beta,L)$ for appropriate choices of the smoothness parameter $\beta\in(0,2]$ and the Lipschitz constant $L\in(0,\infty)$. In view of the definition of $\Sigma(\beta,L)$, this entails not only compact support on $[0,1]$, but also a smooth matching at the upper endpoint $x=1$, since $f(x)=0$ for $x>1$ and the derivatives are required to exist and be bounded on $(0,\infty)$. To provide a concrete illustration of these constraints, we consider target densities obtained by mirroring a gamma density around $1$ and truncating to~$[0,1]$.

Let $g=g_{\alpha,\theta}$ be a gamma density with shape parameter $\alpha\in(0,\infty)$ and scale parameter $\theta\in(0,\infty)$, given by
\begin{equation}\label{eq:gamma-density}
g_{\alpha,\theta}(s) = \frac{s^{\alpha - 1} e^{-s/\theta}}{\theta^{\alpha} \Gamma(\alpha)} \ind_{(0,\infty)}(s).
\end{equation}
Define the normalizing constant
\[
c_{\alpha,\theta} \leqdef \left(\int_0^1 g_{\alpha,\theta}(u) \, \rd u\right)^{-1}\in (1,\infty),
\]
and the associated \emph{mirrored gamma density truncated to $[0,1]$} by
\begin{equation}\label{eq:mirrored-gamma}
f_{\alpha,\theta}(x) \leqdef c_{\alpha,\theta} \, g_{\alpha,\theta}(1-x)\ind_{[0,1]}(x), \qquad x\in [0,\infty);
\end{equation}
see Figure~\ref{fig:mirrortruncgamma}.

\begin{figure}[!ht]
\centering
\begin{subfigure}[t]{0.49\textwidth}
\centering
\includegraphics[width=\textwidth]{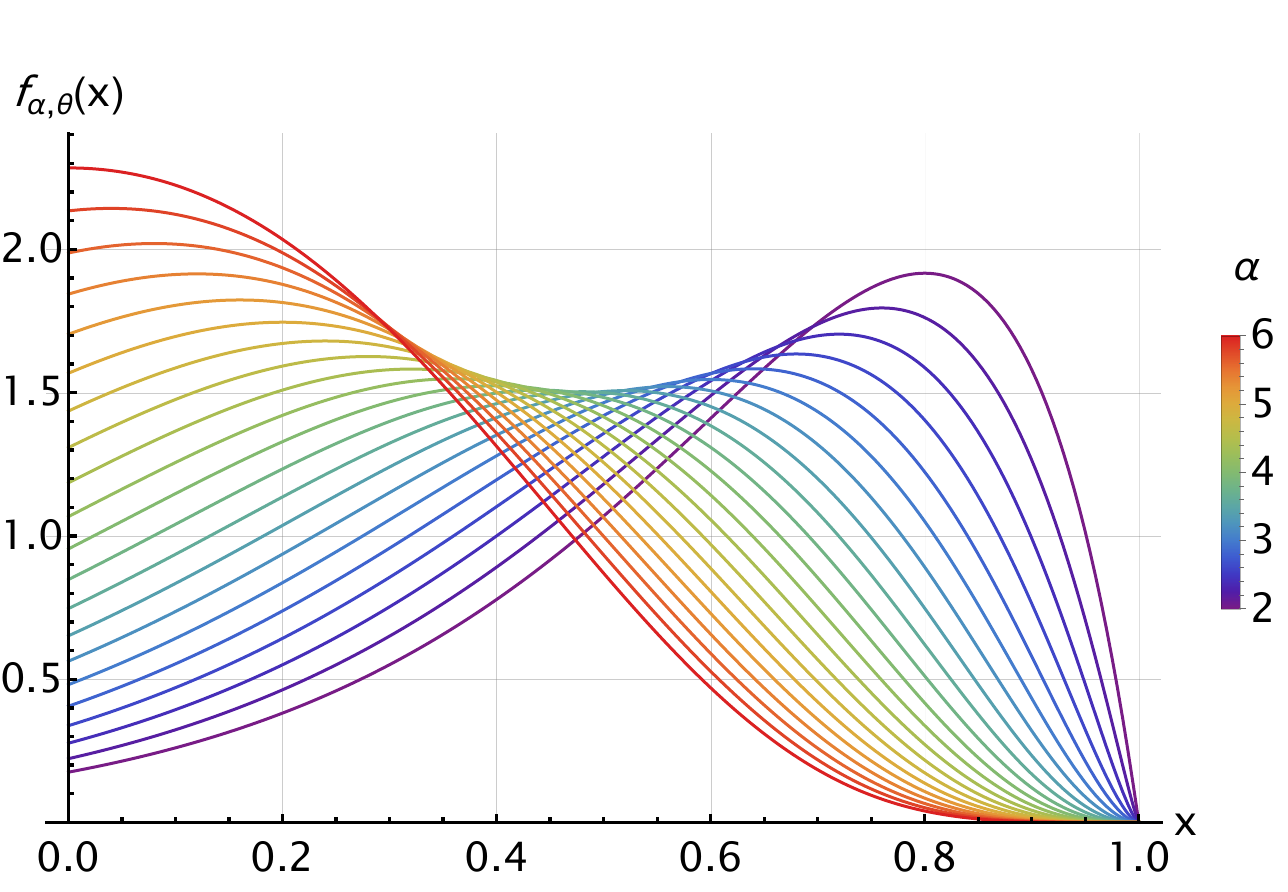}
\end{subfigure}
\hfill
\begin{subfigure}[t]{0.49\textwidth}
\centering
\includegraphics[width=\textwidth]{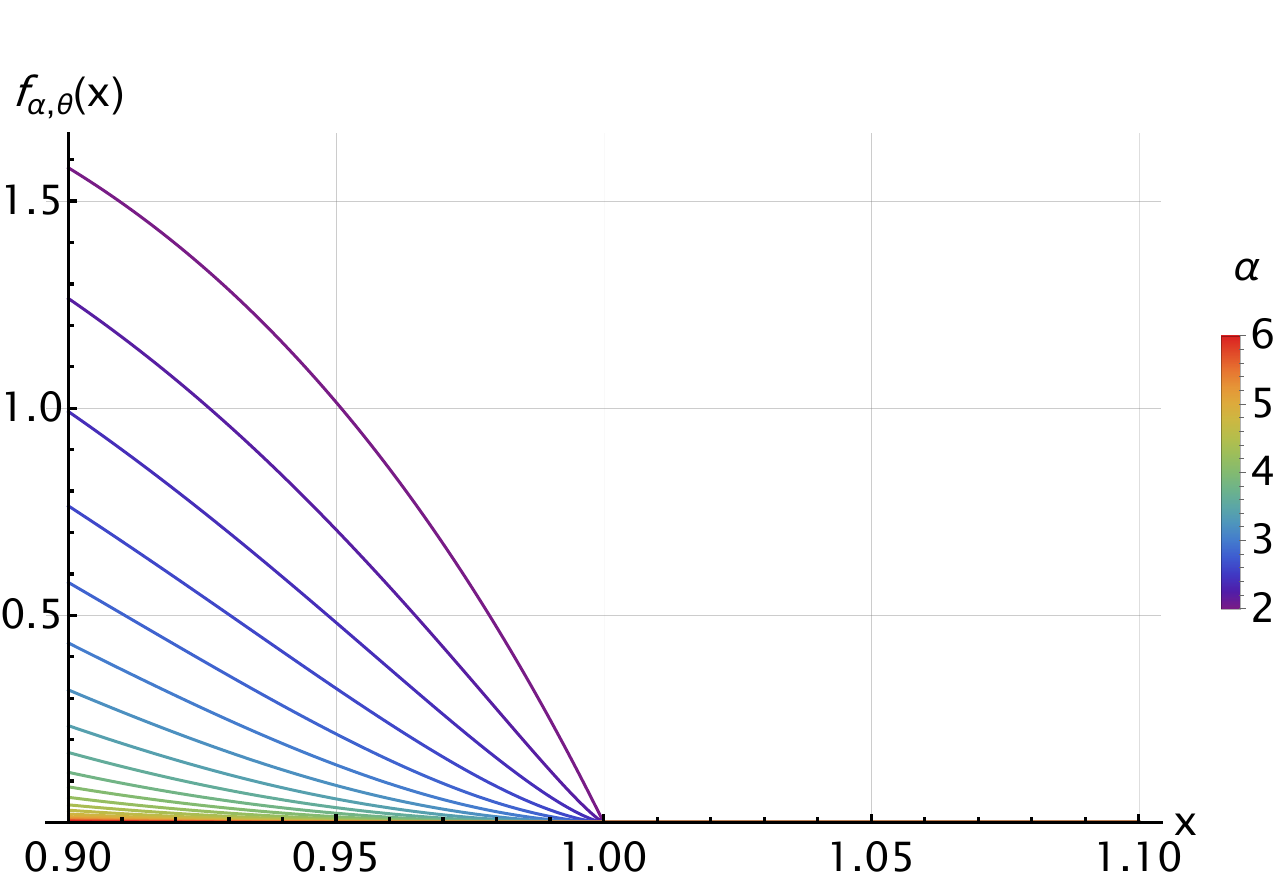}
\end{subfigure}
\caption{Visualization of the mirrored gamma densities truncated to $[0,1]$ defined in \eqref{eq:mirrored-gamma} for $\theta=0.2$ and various values of the shape parameter $\alpha$ (from $2.0$ to $6.0$ with $0.2$ increments). The left panel shows the full range $x\in[0,1]$, while the right panel zooms in on $x\in[0.9,1.1]$.}
\label{fig:mirrortruncgamma}
\end{figure}

By construction, $f_{\alpha,\theta}$ is supported on $[0,1]$ and integrates to one. Moreover, since $g_{\alpha,\theta}$ is $C^{\infty}$ on $(0,\infty)$, it follows that $f_{\alpha,\theta}$ is $C^{\infty}$ on $(0,1)$. For every $k\in\N_0$ and every $x\in (0,1)$,
\begin{equation}\label{eq:mirrored-derivatives}
\smash{f_{\alpha,\theta}^{(k)}}(x) = c_{\alpha,\theta}(-1)^k g_{\alpha,\theta}^{(k)}(1-x).
\end{equation}
On the other hand, $f_{\alpha,\theta}(x)=0$ for $x>1$, and hence $\smash{f_{\alpha,\theta}^{(k)}}(x)=0$ for all $k\in\N_0$ and all $x>1$. Therefore, the only potential obstruction to $f_{\alpha,\theta}\in\Sigma(\beta,L)$ arises at the endpoint $x=1$, and it is governed by the behavior of $\smash{g_{\alpha,\theta}^{(k)}}(s)$ as $s\downarrow 0$.

To describe this behavior, note that as $s\downarrow 0$,
\[
g_{\alpha,\theta}(s) = \frac{s^{\alpha-1}}{\theta^{\alpha}\Gamma(\alpha)}\left\{1 + \OO(s)\right\}.
\]
Differentiating $s^{\alpha-1}e^{-s/\theta}$ repeatedly shows that for each fixed $k\in\N_0$, there exists a function $h_{\alpha,\theta,k}$ that is bounded on $(0,1]$ (and depends only on $\alpha,\theta,k$) such that
\begin{equation}\label{eq:gamma-derivative-factor}
g_{\alpha,\theta}^{(k)}(s) = s^{\alpha-1-k} \, h_{\alpha,\theta,k}(s), \qquad s\in (0,1].
\end{equation}
Combining \eqref{eq:mirrored-derivatives} and \eqref{eq:gamma-derivative-factor} yields that, for every $k\in\N_0$,
\begin{equation}\label{eq:mirrored-endpoint-rate}
\smash{f_{\alpha,\theta}^{(k)}}(x) = \OO\bigl((1-x)^{\alpha-1-k}\bigr), \qquad x\uparrow 1.
\end{equation}
In particular, if $\alpha>k+1$, then $\smash{f_{\alpha,\theta}^{(k)}}(x)\to 0$ as $x\uparrow 1$, which ensures the upper-endpoint smoothness matching condition $\smash{f_{\alpha,\theta}^{(k)}}(1)=0$ required for $k$th differentiability at $x=1$ (given that $\smash{f_{\alpha,\theta}^{(k)}}(x)\equiv 0$ for $x>1$). Conversely, if $\alpha\leq k+1$, then $\smash{f_{\alpha,\theta}^{(k)}}$ fails to vanish at $x=1$ (and may even diverge), preventing membership in $\Sigma(\beta,L)$ once derivatives up to order $k$ are required.

These observations lead to a simple characterization of the largest H\"older smoothness index that can be accommodated by the family $\{f_{\alpha,\theta}:\alpha,\theta>0\}$.

\begin{proposition}\label{prop:regularity}
Let $g_{\alpha,\theta}$ be the gamma density \eqref{eq:gamma-density} and let $f_{\alpha,\theta}$ be its mirrored-and-truncated version \eqref{eq:mirrored-gamma}.
\begin{itemize}\setlength\itemsep{0em}
\item[{\rm (i)}]
If $\alpha>1$, then for every $\beta\in (0,\alpha-1]$ there exists $L\in(0,\infty)$ (depending on $\alpha,\theta,\beta$) such that $f_{\alpha,\theta}\in \Sigma(\beta,L)$.
\item[{\rm (ii)}]
If $\beta>\alpha-1$, then for every $L\in(0,\infty)$ one has $f_{\alpha,\theta}\notin \Sigma(\beta,L)$.
\end{itemize}
\end{proposition}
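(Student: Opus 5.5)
The plan is to reduce both parts to the endpoint asymptotics \eqref{eq:mirrored-derivatives}–\eqref{eq:mirrored-endpoint-rate}, together with a sharper form of \eqref{eq:gamma-derivative-factor} that identifies the leading coefficient. By the Leibniz rule applied to $s^{\alpha-1}e^{-s/\theta}$,
\[
g_{\alpha,\theta}^{(k)}(s)=\frac{(\alpha-1)(\alpha-2)\cdots(\alpha-k)}{\theta^{\alpha}\Gamma(\alpha)}\,s^{\alpha-1-k}\{1+\OO(s)\}, \qquad s\downarrow 0,
\]
provided the falling factorial is nonzero. If it vanishes, which happens only when $\alpha\in\{1,\ldots,k\}$, then $g^{(k)}$ is bounded near $0$. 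Away from $x=1$ everything is harmless. On $(0,1/2]$ the function $f_{\alpha,\theta}$ is a restriction of a function that is $C^\infty$ near $s\in[1/2,1]$, so all derivatives are bounded there. The allowed jump at $0$ plays no role, since the class only involves $u,v\in(0,\infty)$. On $(1,\infty)$ everything is identically $0$. Throughout, write $m=\sup\{\ell\in\N_0:\ell<\beta\}$ and $\gamma=\beta-m\in(0,1]$.

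For (i), assume $\beta\le\alpha-1$. Then for every $k\le m$ we have $\alpha-1-k\ge\beta-k>0$, so by \eqref{eq:mirrored-endpoint-rate} $f^{(k)}(x)\to0$ as $x\uparrow1$, and each $f^{(k)}$ is bounded on $(0,\infty)$. Differentiability at $1$ is proved inductively: for $k\le m-1$, $|f^{(k)}(x)|/(1-x)=\OO((1-x)^{\alpha-2-k})$ with $\alpha-2-k\ge\alpha-1-m>0$, so one-sided difference quotients vanish and $f^{(k+1)}(1)=0$ exists. The core step is the H\"older bound for $f^{(m)}$. Set $\delta=\alpha-1-m>0$ and use $|f^{(m+1)}(x)|\le C(1-x)^{\delta-1}$ on $(1/2,1)$. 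If $\delta\ge1$, then $f^{(m)}$ is Lipschitz on $(0,\infty)$. If $\delta\in(0,1)$, take $u<v$ with $d=v-u$ and split into three cases:
\begin{itemize}
\item if $v\ge1$, bound the increment by $|f^{(m)}(u)|\le C(1-u)_+^{\delta}\le Cd^{\delta}$;
\item if $v<1$ and $1-v\le d$, bound both values by $C(2d)^{\delta}$;
\item if $1-v>d$, use the mean value theorem to get $Cd(1-v)^{\delta-1}\le Cd^{\delta}$.
\end{itemize}
Since $\gamma\le\delta\wedge 1$ and $f^{(m)}$ is bounded, increments with $d\ge1$ are handled by boundedness. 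For $d<1$, $d^{\delta\wedge1}\le d^{\gamma}$. Taking $L$ to be the maximum of all these constants finishes (i).

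For (ii), assume $\beta>\alpha-1$ and suppose $f\in\Sigma(\beta,L)$; we derive a contradiction. There are three cases.
\begin{itemize}
\item \emph{Case $\alpha-1\notin\N_0$.} Let $k=\lceil\alpha-1\rceil$ (with $k=0$ if $\alpha<1$). The leading coefficient of $g^{(k)}$ is nonzero.
 \begin{itemize}
 \item If $k\le m$, then $f^{(k)}(x)\asymp(1-x)^{\alpha-1-k}\to\infty$, contradicting boundedness.
 \item If $k>m$, then $m=\lfloor\alpha-1\rfloor$ and $f^{(m)}(v)=0$ for $v>1$. Letting $v\downarrow1$, the H\"older quotient is at least $c(1-x)^{\alpha-1-m}/(1-x)^{\beta-m}=c(1-x)^{\alpha-1-\beta}\to\infty$.
 \end{itemize}
\item \emph{Case $\alpha-1=k\in\N_0$.} Then $\beta>k$ forces $m\ge k$, and $g^{(k)}(0^+)=k!/\{\theta^\alpha\Gamma(\alpha)\}\neq0$. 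Hence $f^{(k)}$ has a nonzero left limit at $1$ but vanishes to the right. If $k<m$, this contradicts differentiability of $f^{(k)}$ at $1$. If $k=m$, it contradicts the H\"older bound across $1$.
\end{itemize}

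The main obstacle is bookkeeping rather than depth. One part is verifying the nonvanishing of the leading coefficient, via the falling factorial above, in exactly the cases used. The other is the H\"older case split near $x=1$ in (i), where the interval straddles the endpoint.
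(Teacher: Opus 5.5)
Your proposal is correct and follows essentially the paper's route: both reduce everything to the endpoint behaviour $f_{\alpha,\theta}^{(k)}(x)=\OO((1-x)^{\alpha-1-k})$, and for (ii) both split according to whether $\alpha\le m+1$ (some derivative of order at most $m$ blows up or jumps at $1$) or $\alpha>m+1$ (the H\"older quotient behaves like $h^{\alpha-1-\beta}$). Your version is more careful than the paper's sketch in two places: the nonzero falling-factorial leading coefficient supplies the lower bounds that the $\OO$-statement alone cannot give, and the bound $|f^{(m+1)}(x)|\le C(1-x)^{\delta-1}$ with your three-case split actually proves the H\"older continuity of $f^{(m)}$ near $1$, which the paper only infers from the pointwise rate.
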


The proof is straightforward from \eqref{eq:mirrored-endpoint-rate}. For~(i), let $\beta\in(0,\alpha-1]$ and let $m=\sup\{\ell\in\N_0:\ell<\beta\}$. Then $\beta-m\in(0,1]$ and $\alpha-1-m\geq \beta-m>0$, so \eqref{eq:mirrored-endpoint-rate} implies that $\smash{f_{\alpha,\theta}^{(k)}}(x)\to 0$ as $x\uparrow 1$ for all $0\leq k\leq m$, and that $\smash{f_{\alpha,\theta}^{(m)}}(x)=\OO((1-x)^{\alpha-1-m})$ near~$1$. Since the map $t\mapsto t^{\alpha-1-m}$ is $(\beta-m)$-H\"older on $[0,1]$ whenever $\alpha-1-m\geq \beta-m$, it follows that $\smash{f_{\alpha,\theta}^{(m)}}$ is $(\beta-m)$-H\"older in a neighborhood of~$1$; away from~$1$, the function is smooth and hence (locally) Lipschitz, which is stronger than $(\beta-m)$-H\"older because $\beta-m\leq 1$. This yields $f_{\alpha,\theta}\in\Sigma(\beta,L)$ for some finite~$L$ after taking suprema over the compact support. For~(ii), if $\alpha\leq m+1$ (with $m$ associated to $\beta$), then $f_{\alpha,\theta}$ fails to be $m$-times differentiable at $x=1$ by \eqref{eq:mirrored-endpoint-rate}. If instead $\alpha>m+1$, then $\smash{f_{\alpha,\theta}^{(m)}}(1)=0$ but, writing $x=1-h$ with $h\downarrow 0$, \eqref{eq:mirrored-endpoint-rate} gives
\[
|f_{\alpha,\theta}^{(m)}(1-h) - f_{\alpha,\theta}^{(m)}(1)| \asymp h^{\alpha - 1 - m}
\quad\Longrightarrow\quad
\frac{|f_{\alpha,\theta}^{(m)}(1-h) - f_{\alpha,\theta}^{(m)}(1)|}{h^{\beta - m}} \asymp h^{\alpha - 1 - \beta},
\]
so the H\"older quotient with exponent $\beta-m$ diverges whenever $\beta>\alpha-1$.

Finally, note that the inclusion property
\begin{equation}\label{eq:inclusion}
f \in \Sigma(\beta,L) \quad \Longrightarrow \quad \forall_{\beta^{\star}\in(0,\beta)} \exists_{L^{\star}\in(0,\infty)} \quad f \in \Sigma(\beta^{\star},L^{\star})
\end{equation}
holds in full generality. In particular, if one believes that the target density has smoothness $\beta>2$, then Theorem~\ref{thm:minimax} can still be invoked by working with a lower smoothness index $\beta^{\star}\in(0,2]$ for which $(p,\beta^{\star})$ belongs to the minimaxity region of Theorem~\ref{thm:minimax}, and for which the target also belongs to $\Sigma(\beta^{\star},L^{\star})$. This comes at the expense of using the slower minimax rate $n^{-\beta^{\star}/(2\beta^{\star}+1)}$ associated with the larger class $\Sigma(\beta^{\star},L^{\star})$. Proposition~\ref{prop:2} shows that the ordinary gamma kernel density estimator cannot exploit smoothness above order $2$ in a minimax way over $\Sigma(\beta,L)$. Thus, when smoothness $\beta>2$ is credible and the faster rate $n^{-\beta/(2\beta+1)}$ is the statistical target, the ordinary gamma estimator should be viewed, in the admissible lower-smoothness regimes just described, only as providing a robust lower-smoothness guarantee. To pursue the faster rate while still treating the right endpoint as unknown or not built into the smoothing rule, one should instead consider higher-order bias-corrected asymmetric kernel estimators on the semi-infinite interval \citep[e.g.,][]{IgarashiKakizawa2020}, bias-reduced generalized gamma kernels \citep[e.g.,][]{IgarashiKakizawa2018}, or local polynomial density estimators with automatic boundary adaptation \citep[e.g.,][]{CattaneoJanssonMa2020}. If reliable information about a finite upper endpoint is available and is meant to be encoded in the estimator, compact-support or truncated and renormalized variants may also be considered, but that is a different modeling choice from the ordinary half-line gamma estimator studied here.

\section{Discussion and future research directions}\label{sec:future}

\subsection{Adaptive bandwidth selection, endpoint leakage, and the remaining minimax gap}

The present work treats the iid setting and assumes that the smoothness index $\beta$ (hence the bandwidth order) is known. A natural continuation is to devise \emph{fully data-driven} choices of $b$ that remain rate-optimal for the global $L^p$ risk, and to clarify how the spatially inhomogeneous smoothing of gamma kernels impacts oracle-type inequalities and adaptation. Related Goldenshluger--Lepski-type selection ideas and oracle inequalities have been developed for local polynomial density estimation on complicated supports, including domains with local pinches; see, e.g., \citet{Bertin2025}. Closely related is the problem of moving beyond the analytical compact-support/matching assumption at the upper endpoint. If the upper endpoint is known and should be built into the estimator, beta kernels or truncated and renormalized gamma kernels are the relevant competitors. If the endpoint is unknown but the data are effectively bounded, it would be useful to develop risk bounds that separate upper-endpoint leakage from the lower-boundary behavior isolated in the present paper.

An important question is the remaining strip $\left\{(p,\beta): 3 < p < 4,\ 0 < \beta \leq (p-3)/(p-2)\right\}$, where the present paper gives neither minimaxity nor non-minimaxity. Resolving it would require a sharper analysis of the stochastic term near the origin or a new lower-bound construction. This is important because the gap is not caused by the bias calculation at the upper endpoint, but by the integrated moments of the gamma kernel near the lower endpoint.

\subsection{Modified and generalized gamma kernels}

It would also be natural to extend the present minimax analysis to Chen's modified gamma kernel \citep{Chen2000Gamma} and to generalized gamma kernels \citep{HirukawaSakudo2015,IgarashiKakizawa2018}. For the uncorrected variants, the existing pointwise and MISE calculations suggest that the same minimax and non-minimax conclusions should be robust, up to changes in constants and under uniform versions of the required moment and tail bounds. Indeed, Chen's modified gamma kernel changes the local parametrization near the origin and removes the first-derivative term from the leading interior bias, but its leading bias is still of order $b$, its interior variance has the same order $n^{-1}b^{-1/2}x^{-1/2}f(x)$, and its boundary variance has the same order $n^{-1}b^{-1}$ as for the ordinary gamma kernel. Thus the proof of Theorem~\ref{thm:minimax} should carry over after verifying the corresponding kernel power bounds, and the non-minimaxity mechanisms in Propositions~\ref{prop:2} and~\ref{prop:3} should persist because they are driven by order-$b$ bias, by the usual interior stochastic fluctuation, and, for large $p$, by the near-origin singularity of the integrated stochastic term, not by the exact constants in the kernel.

The same reasoning applies to the uncorrected generalized gamma kernels of \citet{HirukawaSakudo2015} and to the uncorrected generalized gamma estimator of \citet{IgarashiKakizawa2018}. These kernels were designed to have the same first-order bias and variance structure as the modified gamma kernel, and their kernel powers have the same interior and boundary orders needed in the Rosenthal-type bounds used in the proofs. A complete proof would nevertheless require checking these estimates uniformly in the evaluation point and for the relevant $L^p$ powers, together with the tail bounds needed when the target is supported on $[0,1]$ but the estimator is still supported on $[0,\infty)$. This is especially important for the generalized gamma kernels with negative exponents in \citet{IgarashiKakizawa2018}, where finite-moment restrictions depend on the order of the moment being used. The upper-endpoint smoothness matching condition would remain necessary for all these untruncated half-line estimators, since none of them encodes the finite endpoint unless it is explicitly truncated or renormalized.

The bias-reduced estimators of \citet{IgarashiKakizawa2018} are different. Since their bias is of order $b^2$ rather than $b$, they should not be expected to satisfy the same non-minimaxity statement for $\beta>2$. They are designed precisely to exploit higher smoothness, and their global minimax analysis would require a separate treatment, with different smoothness ranges and possibly different restrictions on $p$. In particular, one would need to control the signed or multiplicatively corrected kernels, the higher-order bias terms, the boundary stochastic moments, and the same upper-endpoint leakage effect considered in the present paper.

\subsection{Other extensions}

Another interesting direction is to extend the minimax analysis to other asymmetric-kernel smoothing problems, such as nonparametric regression or conditional density estimation with nonnegative responses, building on the gamma/beta kernel regression and local-linear constructions of \citet{Chen2000BetaReg,Chen2002LocalLinear}. It would also be valuable to understand to what extent the regions of $(p,\beta)$ identified here persist when the sample is \emph{dependent} (e.g., stationary strongly mixing sequences), where the stochastic term must reflect long-run variance contributions and the analysis requires dependence-adapted moment inequalities.

Finally, extending global $L^p$ minimax results to higher-dimensional associated kernels \citep[e.g.,][]{KokonendjiSome2021} and to matrix supports remains largely open. In particular, obtaining analogues of Theorem~\ref{thm:minimax} for the Wishart kernel density estimator on the cone of symmetric positive definite matrices \citep{BelzileGenestOuimetRichards2025} appears promising, but the underlying geometry and boundary structure make the control of bias and integrated moments substantially more challenging.

\section{Proofs}\label{sec:proofs}

\subsection{Proof of Theorem~\ref{thm:minimax}}\label{sec:proof.thm.1}

\textbf{Step 1: Risk decomposition.}
Fix $n\in \N$, $b\in (0,1]$ and $f\in \Sigma(\beta,L)$. Decompose
\[
\hat{f}_{n,b} - f = (\hat{f}_{n,b} - \EE[\hat{f}_{n,b}]) + (\EE[\hat{f}_{n,b}] - f).
\]
By applying the triangle inequality for the $L^p([0,\infty))$ and $L^p(\Omega)$ norms, respectively, we have
\begin{equation}\label{eq:decomp}
\begin{aligned}
\big\{\EE(\|\hat{f}_{n,b} - f\|_p^p)\big\}^{1/p}
&\leq \big\{\EE\big[\big(\|\hat{f}_{n,b} - \EE[\hat{f}_{n,b}]\|_p + \|\EE[\hat{f}_{n,b}] - f\|_p\big)^p\big]\big\}^{1/p} \\
&\leq \big\{\EE(\|\hat{f}_{n,b} - \EE[\hat{f}_{n,b}]\|_p^p)\big\}^{1/p} + \|\EE[\hat{f}_{n,b}] - f\|_p \\
&\equiv A_n(b,f) + B_n(b,f).
\end{aligned}
\end{equation}
Taking suprema over $f\in \Sigma(\beta,L)$ yields
\[
R_n\{\hat{f}_{n,b},\Sigma(\beta,L)\} \leq \sup_{f\in \Sigma(\beta,L)} A_n(b,f) + \sup_{f\in \Sigma(\beta,L)} B_n(b,f).
\]

\bigskip
\textbf{Step 2: Kernel bounds.}
For the next steps, we register bounds below on the kernel: (i) a uniform bound; (ii) an $L^2([0,\infty))$ bound; and (iii) an exponential tail bound (valid for $x\geq 3$).
\begin{enumerate}[(a)]
\item For $x > 0$, the gamma density $t\mapsto K_b(x,t)$ is unimodal with mode at $t = x$, so $\|K_b(x,\cdot)\|_{\infty} = K_b(x,x)$. By Stirling's lower bound
\begin{equation}\label{eq:Stirling.bound}
\Gamma(u + 1) \geq \sqrt{2\pi} \, u^{u + 1/2} e^{-u}, \quad u > 0,
\end{equation}
\citep[see, e.g.,][]{Batir2008} with $u = x/b$, we have
\[
K_b(x,x) = \frac{(x/b)^{x/b} e^{-x/b}}{b \, \Gamma(x/b + 1)} \leq \frac{1}{\sqrt{2\pi}} \, b^{-1/2} x^{-1/2}, \quad x > 0.
\]
Fix $x\in (0,1]$ and $b\in (0,1]$. If $x\geq b$, then $x+b\leq 2x$ and thus $x^{-1/2}\leq \sqrt{2} \, (x+b)^{-1/2}$, which implies
\[
K_b(x,x) \leq \frac{1}{\sqrt{\pi}} \, b^{-1/2}(x+b)^{-1/2}.
\]
If instead $0<x<b$, write $u=x/b\in (0,1)$. Using the integral representation $\Gamma(u+1)=\int_0^{\infty} t^{u}e^{-t}\rd t$ and restricting to $[u,u+1]$, we get
\[
\Gamma(u+1)\geq \int_u^{u+1} t^{u}e^{-t}\rd t \geq \int_u^{u+1} u^{u}e^{-(u+1)}\rd t
= u^{u}e^{-(u+1)},
\]
hence
\[
K_b(x,x)=\frac{1}{b} \, \frac{e^{-u}u^{u}}{\Gamma(u+1)}\leq \frac{e}{b}.
\]
Since $x<b$ implies $x+b\leq 2b$, we have $b^{-1}\leq \sqrt{2} \, b^{-1/2}(x+b)^{-1/2}$, and therefore
\[
K_b(x,x)\leq e\sqrt{2} \, b^{-1/2}(x+b)^{-1/2}.
\]
Combining the previous cases yields that, for all $x\in (0,1]$ and $b\in (0,1]$,
\begin{equation}\label{eq:sup-local}
\|K_b(x,\cdot)\|_{\infty} \leq C \, b^{-1/2}(x + b)^{-1/2}.
\end{equation}
In particular, $\|K_b(x,\cdot)\|_{\infty} \leq C \, b^{-1}$ for all $x\in (0,1]$.

\item A direct calculation yields
\begin{equation}\label{eq:Bb.def}
\int_0^{\infty} K_b(x,t)^2 \rd t = \frac{b^{-1} \, \Gamma(2x/b + 1)}{2^{2x/b + 1} \Gamma(x/b + 1)^2} \reqdef B_b(x).
\end{equation}
By Stirling's formula, it follows that
\begin{equation}\label{eq:Bb}
B_b(x) \leq C \, b^{-1/2}(x + b)^{-1/2}, \quad x\in (0,1], ~b\in (0,1].
\end{equation}

\item Fix $b\in (0,1]$ and $x \geq 3$. Since the gamma density $t\mapsto K_b(x,t)$ is unimodal and the mode is at $t = x \geq 1$, it is increasing on $[0,1]$. Hence
\[
\sup_{t\in [0,1]} K_b(x,t) = K_b(x,1) = \frac{e^{-1/b}}{b^{x/b + 1} \Gamma(x/b + 1)}.
\]
Applying Stirling's lower bound $\Gamma(u+1)\geq \sqrt{2\pi} \, u^{u+1/2}e^{-u}$ with $u=x/b$ gives
\[
K_b(x,1)
\leq \frac{C}{\sqrt{x b}}\exp\left(-\frac{x\ln x - x + 1}{b}\right).
\]
Moreover, the map $x\mapsto (x\ln x - x + 1)/x = \ln x - 1 + 1/x$ is increasing for $x\geq 1$, hence for all $x\geq 3$,
\[
x\ln x - x + 1 \geq c^{\star} x,
\qquad c^{\star} \leqdef \ln 3 - 1 + \tfrac{1}{3} > 0.
\]
Therefore, there exist constants $c^{\star},C > 0$ such that
\begin{equation}\label{eq:exp-tail}
\sup_{t\in [0,1]} K_b(x,t) \leq \frac{C}{\sqrt{x b}} \exp(-c^{\star} x/b), \quad x \geq 3, ~b\in (0,1].
\end{equation}
\end{enumerate}

\bigskip
\textbf{Step 3: Control of the stochastic term $A_n(b,f)$.}
Write
\[
Z_b(x) \leqdef \hat{f}_{n,b}(x) - \EE[\hat{f}_{n,b}(x)] = \frac{1}{n} \sum_{i=1}^n \{K_b(x,X_i) - \EE[K_b(x,X_i)]\}.
\]
Since $\mathrm{supp}(f)\subseteq [0,1]$, we have $X_i\in [0,1]$ almost surely; hence
\begin{equation}\label{eq:Mb}
|K_b(x,X_i) - \EE[K_b(x,X_i)]| \leq 2 M_b(x), \quad M_b(x) \leqdef \sup_{t\in [0,1]} K_b(x,t).
\end{equation}
Moreover,
\[
\Var\bigl(K_b(x,X_1)\bigr) \leq \EE[K_b(x,X_1)^2] \leq \|f\|_{\infty} \int_0^1 K_b(x,t)^2 \rd t \leq L B_b(x),
\]
where we used $\|f\|_{\infty} \leq L$ and \eqref{eq:Bb.def}.

For $p\in [2,\infty)$, we use Rosenthal's inequality \citep[see, e.g.,][Theorem~3]{Rosenthal1970}: if $Y_1,\ldots,Y_n$ are iid and centered, $|Y_1| \leq M$ almost surely and $\Var(Y_1) = \sigma^2$, then
\[
\EE\left(\left|\frac{1}{n} \sum_{i=1}^n Y_i\right|^p\right) \leq C_p\left\{\left(\frac{M}{n}\right)^{p-2}\frac{\sigma^2}{n} + \left(\frac{\sigma^2}{n}\right)^{p/2}\right\}.
\]
Applying this with $Y_i = K_b(x,X_i) - \EE[K_b(x,X_i)]$, $M = 2M_b(x)$ and $\sigma^2 = \Var(K_b(x,X_1))$ yields, for $p \geq 2$,
\begin{equation}\label{eq:pointwise-moment}
\EE(|Z_b(x)|^p) \leq C_p\left\{\left(\frac{M_b(x)}{n}\right)^{p-2}\frac{L B_b(x)}{n} + \left(\frac{L B_b(x)}{n}\right)^{p/2}\right\}.
\end{equation}
For $p\in [1,2)$, we simply use Lyapunov's inequality
\begin{equation}\label{eq:Lyapunov}
\EE(|Z_b(x)|^p) \leq (\EE[Z_b(x)^2])^{p/2} = (\Var(\hat{f}_{n,b}(x)))^{p/2} \leq \left(\frac{L B_b(x)}{n}\right)^{p/2},
\end{equation}
so \eqref{eq:pointwise-moment} remains valid with the first term dropped inside the braces.

Now, we want to integrate $\EE(|Z_b(x)|^p)$ over $x\geq 0$. We split the integral as follows:
\[
\int_0^{\infty} \EE(|Z_b(x)|^p) \rd x
= \int_0^1 \EE(|Z_b(x)|^p) \rd x
+ \int_1^3 \EE(|Z_b(x)|^p) \rd x
+ \int_3^{\infty} \EE(|Z_b(x)|^p) \rd x.
\]

\medskip
\emph{Integral over $[0,1]$.}
For $x\in (0,1]$, we have $M_b(x) \leq \|K_b(x,\cdot)\|_{\infty}$ and thus \eqref{eq:sup-local} gives $M_b(x) \leq C \, b^{-1/2}(x + b)^{-1/2}$; moreover $B_b(x) \leq C \, b^{-1/2}(x + b)^{-1/2}$ by \eqref{eq:Bb}. For $p \geq 2$ and any $q\in [0,1]$, interpolate the bounds $M_b(x) \leq C \, b^{-1}$ and $M_b(x) \leq C \, b^{-1/2}(x + b)^{-1/2}$ to obtain
\begin{equation}\label{eq:interp}
M_b(x) \leq C \, b^{-1 + q/2}(x + b)^{-q/2}, \quad x\in (0,1].
\end{equation}
Combining \eqref{eq:pointwise-moment}, \eqref{eq:Bb} and \eqref{eq:interp} gives, for $p \geq 2$,
\begin{equation}\label{eq:An-integral-01}
\begin{aligned}
&\int_0^1 \EE(|Z_b(x)|^p) \rd x \\
&\quad\leq C \left[n^{-(p-1)} b^{-\frac{(p-2)(2-q) + 1}{2}} \int_0^1 (x + b)^{-\frac{(p-2)q + 1}{2}} \rd x + n^{-p/2} b^{-p/4} \int_0^1 (x + b)^{-p/4} \rd x\right].
\end{aligned}
\end{equation}
The second integral on the right-hand side is finite uniformly in $b\in (0,1]$ provided $p < 4$. The first integral on the right-hand side is finite provided
\begin{equation}\label{eq:q-integrability}
\frac{(p-2)q + 1}{2} < 1 \quad \Longleftrightarrow \quad (p-2) q < 1.
\end{equation}
If $p\in [1,2)$, then \eqref{eq:Lyapunov} and \eqref{eq:Bb} give directly
\[
\int_0^1 \EE(|Z_b(x)|^p) \rd x \leq C n^{-p/2} b^{-p/4} \int_0^1 (x+b)^{-p/4} \rd x \leq C n^{-p/2}b^{-p/4},
\]
since $p/4<1$.

\medskip
\emph{Integral over $[1,3]$.}
For $x\in [1,3]$, the global supremum satisfies $M_b(x)\leq \|K_b(x,\cdot)\|_\infty = K_b(x,x)\leq C b^{-1/2}$ (since $x^{-1/2}\leq 1$ on $[1,3]$), and similarly $B_b(x)\leq C b^{-1/2}$ (e.g., from \eqref{eq:Bb.def} and Stirling's formula). Plugging these bounds in \eqref{eq:pointwise-moment} yields, for $p\geq 2$,
\[
\EE(|Z_b(x)|^p) \leq C_p\Big\{n^{-(p-1)} b^{-(p-1)/2} + n^{-p/2} b^{-p/4}\Big\},
\quad x\in[1,3],
\]
and for $p\in[1,2)$, \eqref{eq:Lyapunov} gives $\EE(|Z_b(x)|^p)\leq C n^{-p/2}b^{-p/4}$. Consequently, for all $p\in[1,\infty)$,
\begin{equation}\label{eq:integral.1.3}
\int_1^3 \EE(|Z_b(x)|^p) \, \rd x
\leq C\Big\{n^{-(p-1)} b^{-(p-1)/2} \ind_{[2,\infty)}(p) + n^{-p/2} b^{-p/4}\Big\}.
\end{equation}

\medskip
\emph{Tail integral over $[3,\infty)$.}
For the tail part $x \geq 3$, we use \eqref{eq:exp-tail} and \eqref{eq:Mb}. Since $0\leq K_b(x,X_1)\leq M_b(x)$ almost surely, we have
\[
\Var\bigl(K_b(x,X_1)\bigr) \leq M_b(x)^2.
\]
If $p\geq 2$, then Rosenthal's inequality gives
\[
\EE(|Z_b(x)|^p) \leq C_p\Big\{n^{-(p-1)}M_b(x)^p + n^{-p/2}M_b(x)^p\Big\} \leq C_p n^{-p/2}M_b(x)^p.
\]
If $p\in [1,2)$, then Lyapunov's inequality gives
\[
\EE(|Z_b(x)|^p) \leq \left(\frac{M_b(x)^2}{n}\right)^{p/2} = n^{-p/2}M_b(x)^p.
\]
Consequently, for all $p\in [1,4)$,
\begin{equation}\label{eq:integral.3.inf}
\begin{aligned}
\int_3^{\infty} \EE(|Z_b(x)|^p) \rd x
&\leq C n^{-p/2} \int_3^{\infty} M_b(x)^p \rd x \\
&\leq C n^{-p/2} b^{-p/2} \int_3^{\infty} x^{-p/2} \exp(-c^{\star} p x/b) \, \rd x \\
&\leq C n^{-p/2} b^{1-p/2}\exp(-3c^{\star} p/b) \\
&\leq C n^{-p/2} b^{-p/4},
\end{aligned}
\end{equation}
where the last inequality uses $p<4$.

\medskip
\emph{Conclusion for $A_n(b,f)$.}
If $p\in [1,2)$, then only the variance-type term contributes and we obtain
\[
\int_0^{\infty} \EE(|Z_b(x)|^p) \rd x \leq C \, n^{-p/2}b^{-p/4}.
\]
It follows that, for $p\in [1,2)$,
\begin{equation}\label{eq:A.n.1}
A_n(b,f) \leq C \, n^{-1/2}b^{-1/4}, \quad \text{for all } f\in \Sigma(\beta,L).
\end{equation}

If $p\in [2,3)$, take $q = 1$ in \eqref{eq:An-integral-01}; then \eqref{eq:q-integrability} holds and both integrals are finite on the right-hand side of \eqref{eq:An-integral-01}. Combining the bounds on $[0,1]$, $[1,3]$ and $[3,\infty)$ yields
\[
\int_0^{\infty} \EE(|Z_b(x)|^p) \rd x
\leq C\Big\{n^{-(p-1)} b^{-(p-1)/2} + n^{-p/2}b^{-p/4}\Big\}.
\]
For the bandwidth choice $b = b_n = c \, n^{-2/(2\beta + 1)}$ in Step~5 below, one has $n b_n^{1/2}\to\infty$, and therefore
\[
\frac{n^{-(p-1)} b_n^{-(p-1)/2}}{n^{-p/2} b_n^{-p/4}}
= (n b_n^{1/2})^{-(p-2)/2} \longrightarrow 0
\quad \text{if }p>2,
\]
(and equals $1$ if $p=2$). Hence, for $p\in [2,3)$,
\begin{equation}\label{eq:A.n.2}
A_n(b_n,f) \leq C \, n^{-1/2}b_n^{-1/4}, \quad \text{for all } f\in \Sigma(\beta,L).
\end{equation}

Now let $p\in [3,4)$ and assume $(p,\beta)\in \mathcal{S}$, i.e., $\beta > (p-3)/(p-2)$. Choose $q$ such that
\[
1 - \beta < q < \frac{1}{p-2},
\]
which is possible precisely because $\beta > (p-3)/(p-2)$. With this choice, the condition in \eqref{eq:q-integrability} holds. Moreover, for $b = b_n = c \, n^{-2/(2\beta + 1)}$, we have
\[
n^{-(p-1)} b_n^{-\frac{(p-2)(2-q) + 1}{2}}
\asymp \bigl(n^{-p/2}b_n^{-p/4}\bigr) \, n^{1 - p/2} b_n^{\frac{(p-2)(2q-3)}{4}}
\asymp \bigl(n^{-p/2}b_n^{-p/4}\bigr) \, n^{1 - p/2 + \frac{(p-2)(3-2q)}{2(2\beta + 1)}}.
\]
The exponent of $n$ in the last display is
\[
1 - \frac{p}{2} + \frac{(p-2)(3-2q)}{2(2\beta + 1)} \leq 1 - \frac{p}{2} + \frac{(p-2)(1 + 2\beta)}{2(2\beta + 1)} = 0,
\]
where we used $q > 1 - \beta$, i.e., $3-2q < 1 + 2\beta$. Hence the first term on the right-hand side of \eqref{eq:An-integral-01} is dominated by the second, uniformly in $n$. Together with the bounds \eqref{eq:integral.1.3} and \eqref{eq:integral.3.inf}, we get, for $p\in [3,4)$,
\begin{equation}\label{eq:A.n.3}
A_n(b_n,f) \leq C \, n^{-1/2}b_n^{-1/4}, \quad \text{for all } f\in \Sigma(\beta,L).
\end{equation}
Combining \eqref{eq:A.n.1}, \eqref{eq:A.n.2}, and \eqref{eq:A.n.3}, and taking the supremum over $f\in \Sigma(\beta,L)$ yields, for all $(p,\beta)$ satisfying the assumptions of Theorem~\ref{thm:minimax},
\begin{equation}\label{eq:An - final}
\sup_{f\in \Sigma(\beta,L)} A_n(b_n,f) \leq C \, n^{-1/2}b_n^{-1/4}.
\end{equation}

\bigskip
\textbf{Step 4: Control of the bias term $B_n(b,f)$.}
Recall that, for each $x > 0$,
\[
\EE[\hat{f}_{n,b}(x)] = \int_0^1 K_b(x,t) f(t) \rd t = \EE[f(\xi_x)],
\]
where $\xi_x\sim \mathrm{Gamma}(x/b + 1,b)$. Hence $B_n(b,f) = \| \EE f(\xi_\cdot) - f(\cdot)\|_p$.

\medskip
\emph{Bias on $(0,3]$.}
If $\beta\in (0,1]$, then $|f(t) - f(x)| \leq L|t-x|^{\beta}$ and therefore
\[
|\EE[f(\xi_x)] - f(x)| \leq L \, \EE(|\xi_x - x|^{\beta}).
\]
If $\beta\in (1,2]$, then by Taylor's theorem with remainder and the definition of $\Sigma(\beta,L)$,
\[
f(\xi_x) - f(x) = f'(x)(\xi_x - x) + R_x, \quad |R_x| \leq L |\xi_x - x|^{\beta},
\]
so, using $|f'(x)| \leq L$, we obtain
\[
|\EE[f(\xi_x)] - f(x)| \leq L |\EE(\xi_x - x)| + L \, \EE(|\xi_x - x|^{\beta}) = L \, b + L \, \EE(|\xi_x - x|^{\beta}).
\]
In both cases, it remains to bound $\EE(|\xi_x - x|^{\beta})$. Since $\EE(\xi_x) = x + b$ and
\[
\Var(\xi_x) = x b + b^2 \leq 4 b, \quad x\in (0,3], ~b\in (0,1],
\]
Lyapunov's inequality gives $\EE(|\xi_x - \EE\xi_x|^{\beta}) \leq C \, b^{\beta/2}$. Hence, by the triangle inequality,
\[
\EE(|\xi_x - x|^{\beta}) \leq C \, \left(\EE(|\xi_x - \EE(\xi_x)|^{\beta}) + b^{\beta}\right) \leq C \, b^{\beta/2}.
\]
Therefore, for all $\beta\in (0,2]$ and all $x\in (0,3]$,
\[
|\EE[f(\xi_x)] - f(x)| \leq C \, b^{\beta/2},
\]
and consequently
\begin{equation}\label{eq:bias-01}
\int_0^3 |\EE[f(\xi_x)] - f(x)|^p \rd x \leq C \, b^{p\beta/2}.
\end{equation}

\medskip
\emph{Bias on $[3,\infty)$.}
For $x\geq 3$, $f(x)=0$ and
\[
0 \leq \EE[\hat{f}_{n,b}(x)] = \int_0^1 K_b(x,t)f(t) \rd t \leq \|f\|_{\infty} \int_0^1 K_b(x,t) \rd t \leq L M_b(x).
\]
Using \eqref{eq:exp-tail} and integrating yields
\[
\int_3^{\infty} |\EE[\hat{f}_{n,b}(x)]|^p \rd x
\leq C \, b^{-p/2}\int_3^{\infty} x^{-p/2} e^{-c^{\star} p x/b} \, \rd x
\leq C \, b^{1-p/2}e^{-3c^{\star} p/b},
\]
which is negligible compared to any power of $b$.

Combining the two regions $[0,3]$ and $[3,\infty)$ with \eqref{eq:bias-01} yields
\begin{equation}\label{eq:Bn - final}
\sup_{f\in \Sigma(\beta,L)} B_n(b,f) \leq C \, b^{\beta/2}.
\end{equation}

\bigskip
\textbf{Step 5: Choice of $b_n$ and conclusion.}
From \eqref{eq:decomp}, \eqref{eq:An - final} and \eqref{eq:Bn - final}, for the choice $b = b_n = c \, n^{-2/(2\beta + 1)}$, we obtain
\[
R_n\{\hat{f}_{n,b_n},\Sigma(\beta,L)\} \leq C (n^{-1/2}b_n^{-1/4} + b_n^{\beta/2}) \asymp n^{-\beta/(2\beta + 1)},
\]
which is the minimax rate stated in \eqref{eq:minimax.rate}. Therefore,
\[
\limsup_{n\to \infty} \frac{R_n\{\hat{f}_{n,b_n}, \Sigma(\beta,L)\}}{r_n\{\Sigma(\beta,L)\}} < \infty.
\]
This completes the proof of Theorem~\ref{thm:minimax}.

\subsection{Proof of Proposition~\ref{prop:2}}\label{sec:proof.prop.2}

For a density $f$ supported on $[0,1]$, write $\PP_f$, $\EE_f$ and $\Var_f$ for probability, expectation and variance computed under the joint law of $(X_1,\ldots,X_n)$ with iid marginals of density $f$.

Let $L > 1$. We begin with a simple device to lower bound the $L^p([0,\infty))$ norm by an $L^1([0,1])$ norm. For any measurable function $g$,
\begin{equation}\label{eq:Lp.L1}
\|g\|_p \geq \left(\int_0^1 |g(x)|^p \rd x\right)^{1/p} \geq \int_0^1 |g(x)| \rd x \equiv \|g\|_{L^1([0,1])},
\end{equation}
so by Jensen's inequality,
\begin{equation}\label{eq:Lp-to-L1}
R_n(\hat{f}_{n,b},f) = \left\{\EE_f(\|\hat{f}_{n,b} - f\|_p^p)\right\}^{1/p} \geq \EE_f(\|\hat{f}_{n,b} - f\|_{L^1([0,1])}).
\end{equation}

We first introduce the following simple (non-smooth) test functions:
\begin{equation}\label{eq:test-densities}
f_0(x) = \ind_{[0,1]}(x), \qquad f_3(x) = \{1 + \e \cdot(2x-1)\} \ind_{[0,1]}(x),
\end{equation}
where
\[
\e = \e(L) \leqdef \min\left\{\frac{1}{2},\frac{L-1}{2}\right\}\in (0,1/2];
\]
see Figure~\ref{fig:test-densities}. Then $f_0$ and $f_3$ are nonnegative and integrate to one on $[0,1]$. However, due to the hard cutoff at $x=1$, they do not satisfy the smoothness requirements imposed by the definition of $\Sigma(\beta,L)$.

\begin{figure}[!ht]
\centering
\begin{subfigure}[t]{0.49\textwidth}
\centering
\includegraphics[width=\textwidth]{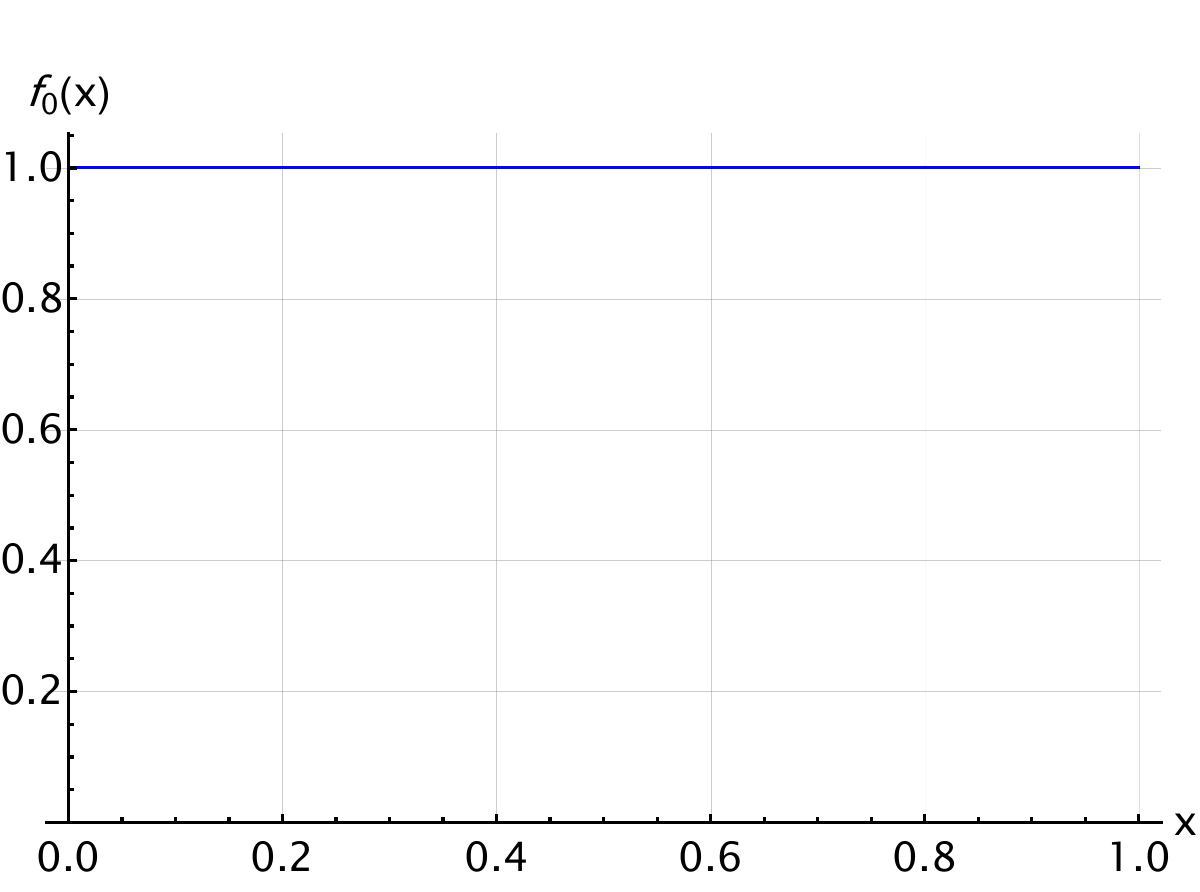}
\end{subfigure}
\hfill
\begin{subfigure}[t]{0.49\textwidth}
\centering
\includegraphics[width=\textwidth]{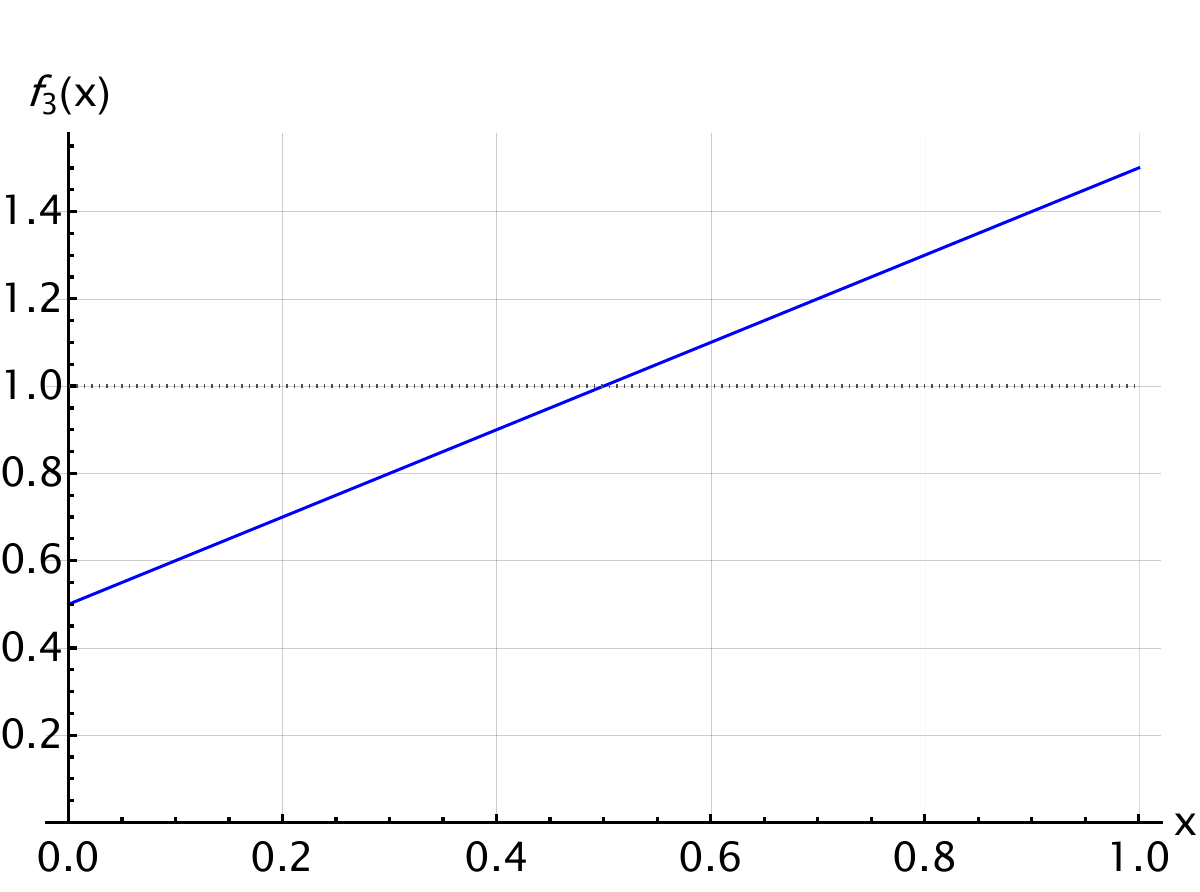}
\end{subfigure}
\caption{Visualization of the non-smooth test densities $f_0$ and $f_3$ defined in \eqref{eq:test-densities} with $L=2$ (hence $\e=1/2$).}
\label{fig:test-densities}
\end{figure}

Let $\widetilde{f}_0$ and $\widetilde{f}_3$ denote $C^{\infty}$ modifications of $f_0$ and $f_3$ on $[7/8,1]$ such that:
\begin{itemize}\setlength\itemsep{0em}
\item $\widetilde{f}_j = f_j$ on $[0,7/8]$ for $j\in\{0,3\}$;
\item $\widetilde{f}_j$ is supported on $[0,1]$ and $\int_0^1 \widetilde{f}_j(x) \, \rd x=1$;
\item $\widetilde{f}_j$ is $C^{\infty}$ on $(0,\infty)$ with $\widetilde{f}_j^{(k)}(1)=0$ for all $k\in\N_0$;
\item $\widetilde{f}_0,\widetilde{f}_3\in \Sigma(\beta,\widetilde{L})$ for an appropriate $\widetilde{L} > 1$.
\end{itemize}
This is a standard mollification construction and we omit the details, since all subsequent lower bounds are obtained by integrating over $x$ in subsets of $[0,1/2]$ and by using exponential tail bounds for the gamma kernel; the smoothing on $[7/8,1]$ does not affect the conclusions, up to exponentially small error terms that are absorbed in the constants.

The next lemma provides the two lower bounds (bias and stochastic fluctuation) needed to prove Proposition~\ref{prop:2}.

\begin{lemma}\label{lem:prop2-key}
Let $\widetilde{f}_0$ and $\widetilde{f}_3$ be as above. There exist constants $b_0\in (0,1)$ and $c_0,c_1,c_2\in (0,\infty)$ (depending at most on $\widetilde{L}$) such that the following statements hold for all $b\in (0,b_0]$.
\begin{enumerate}[(a)]
\item \textbf{Bias lower bound at $\widetilde{f}_3$.} One has
\[
\|\EE_{\widetilde{f}_3}[\hat{f}_{n,b}] - \widetilde{f}_3\|_{L^1([0,1])} \geq c_1 b.
\]
\item \textbf{Fluctuation lower bound at $\widetilde{f}_0$.} If in addition $n b^{1/2} \geq c_0$, then
\[
\EE_{\widetilde{f}_0}\left[\int_{1/4}^{1/2}\big|\hat{f}_{n,b}(x) - \EE_{\widetilde{f}_0}[\hat{f}_{n,b}(x)]\big| \rd x\right] \geq c_2 n^{-1/2}b^{-1/4}.
\]
Moreover,
\[
\sup_{x\in [1/4,1/2]}\big|\EE_{\widetilde{f}_0}[\hat{f}_{n,b}(x)] - \widetilde{f}_0(x)\big| \leq \exp(-c_0/b).
\]
\end{enumerate}
\end{lemma}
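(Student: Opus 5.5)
The plan is to prove the two parts separately. In both, the argument stays inside the region $[0,1/2]$ (more precisely $[1/4,1/2]$ or a fixed subinterval), where $\widetilde{f}_0$ and $\widetilde{f}_3$ coincide with the explicit affine functions $f_0$ and $f_3$. The smoothing on $[7/8,1]$ only enters through kernel mass placed beyond $7/8$.

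\textbf{Part (a).} Work on $x\in[1/4,1/2]$. There,
\[
\EE_{\widetilde f_3}[\hat f_{n,b}(x)] = \EE[\widetilde f_3(\xi_x)] = \EE[f_3(\xi_x)] + \EE[(\widetilde f_3 - f_3)(\xi_x)\ind_{\{\xi_x>7/8\}}].
\]
Since $f_3$ is affine on $[0,1]$ but vanishes beyond $1$, I will first bound the error term. Because $\widetilde f_3 - f_3$ is bounded, this term is at most $C\,\PP(\xi_x>7/8)$. This probability is $\le \exp(-c/b)$ uniformly for $x\le 1/2$, by a Chernoff bound for $\mathrm{Gamma}(x/b+1,b)$: the mean is $x+b\le 1/2+b$ and the variance is of order $b$, so large deviations of size $3/8$ cost $\exp(-c/b)$.

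Next, $\EE[f_3(\xi_x)]$ equals $1+\e(2\EE\xi_x-1)$ up to another $\exp(-c/b)$ error coming from $\{\xi_x>1\}$. Hence
\[
\EE_{\widetilde f_3}[\hat f_{n,b}(x)] - \widetilde f_3(x) = 2\e b + \OO(e^{-c/b}).
\]
For $b\le b_0$ small, this is $\ge \e b$ on an interval of length $1/4$. Integrating gives $\|\cdot\|_{L^1([0,1])}\ge \e b/4 = c_1 b$.

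\textbf{Part (b), bias claim.} The same computation with $f_0\equiv1$ on $[0,1]$ gives $|\EE[\widetilde f_0(\xi_x)]-1|\le C\,\PP(\xi_x>7/8)\le \exp(-c/b)$. After shrinking constants, this becomes $\exp(-c_0/b)$.

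\textbf{Part (b), fluctuation claim.} Write $Z_b(x)=n^{-1}\sum_i Y_i(x)$ with centered $Y_i(x)=K_b(x,X_i)-\EE K_b(x,X_i)$, where $X_i\sim\widetilde f_0$. I plan to lower bound $\EE|Z_b(x)|$ pointwise via the Paley--Zygmund/Hölder device
\[
\EE|Z|\ \ge\ \frac{(\EE Z^2)^{3/2}}{(\EE Z^4)^{1/2}},
\]
which follows from Hölder's inequality $\EE Z^2\le (\EE|Z|)^{2/3}(\EE Z^4)^{1/3}$.

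For $x\in[1/4,1/2]$, the variance satisfies $\EE Z^2 = n^{-1}\Var(K_b(x,X_1))$. Since $\widetilde f_0=1$ on $[0,7/8]$, we have $\EE K_b(x,X_1)^2 \ge \int_0^{7/8}K_b(x,t)^2\rd t = B_b(x)-\OO(e^{-c/b})$. By Stirling's formula, $B_b(x)\ge c\,b^{-1/2}x^{-1/2}\ge c\,b^{-1/2}$; this is the matching lower bound to \eqref{eq:Bb}. Moreover $(\EE K_b)^2\le 1$, so $\EE Z^2\ge c\,n^{-1}b^{-1/2}$.

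For the fourth moment, Rosenthal's inequality with $M_b(x)\le Cb^{-1/2}$ (from \eqref{eq:sup-local}) gives
\[
\EE Z^4\le C\{n^{-3}b^{-1}b^{-1/2} + n^{-2}b^{-1}\} \le C\,n^{-2}b^{-1}\,\{1 + (nb^{1/2})^{-1}\}.
\]
When $nb^{1/2}\ge c_0$, this is $\le C n^{-2}b^{-1}$. Therefore
\[
\EE|Z_b(x)|\ge c\,(n^{-1}b^{-1/2})^{3/2}/(n^{-1}b^{-1/2}) = c\,n^{-1/2}b^{-1/4}.
\]
Integrating over $[1/4,1/2]$ and using Fubini yields the claim.

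\textbf{Main obstacle.} I expect the main obstacle to be the uniform Stirling lower bound $B_b(x)\ge c\,b^{-1/2}x^{-1/2}$ together with uniform exponential tail bounds for $\PP(\xi_x>7/8)$ over $x\in[1/4,1/2]$. For the first, I would use two-sided Stirling bounds,
\[
\Gamma(u+1)=\sqrt{2\pi u}\,(u/e)^u e^{\theta/(12u)}, \quad \theta\in(0,1),
\]
in the ratio $\Gamma(2u+1)/\{4^u\Gamma(u+1)^2\}$, which gives $\asymp u^{-1/2}$ for $u=x/b\ge 1/(4b)$. For the second, I would use the explicit Gamma moment generating function.
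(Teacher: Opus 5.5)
Your proposal is correct and follows the paper's proof essentially step for step. In (a) you use the same $\exp(-c/b)$ tail control for $x\le 1/2$ to get the bias $2\e b+\OO(e^{-c/b})$. In (b) you use the same variance lower bound $\sigma_x^2\ge c\,b^{-1/2}$, combined with a fourth-moment bound driven by $M_x^2\le Cb^{-1}$ and $nb^{1/2}\ge c_0$. The only differences are cosmetic substitutions: H\"older's $\EE|Z|\ge(\EE Z^2)^{3/2}/(\EE Z^4)^{1/2}$ instead of Paley--Zygmund applied to $Z^2$, Rosenthal instead of the exact fourth-moment expansion, and two-sided Stirling bounds instead of monotonicity of the Stirling ratio $R$. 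One small correction: $(\EE_{\widetilde f_0}K_b(x,X_1))^2\le 1$ is not quite right, since $\widetilde f_0$ must exceed $1$ somewhere on $[7/8,1]$; the bound $\le\widetilde L^2$ (or $1+\OO(e^{-c/b})$) is equally negligible against $b^{-1/2}$, so nothing else changes.
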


\begin{proof}[Proof of Lemma~\ref{lem:prop2-key}]
\textbf{Proof of $(a)$.}
Fix $x\in [0,1/2]$ and let $\xi_x\sim\mathrm{Gamma}(x/b + 1,b)$, so that
\[
\EE_{\widetilde{f}_3}[\hat{f}_{n,b}(x)] = \int_0^1 K_b(x,t)\widetilde{f}_3(t) \, \rd t = \EE[\widetilde{f}_3(\xi_x)].
\]
Since $\widetilde{f}_3=f_3$ on $[0,7/8]$ and $x\in[0,1/2]\subseteq[0,7/8]$, we have $\widetilde{f}_3(x)=f_3(x)$. Moreover,
\begin{equation}\label{eq:tilde-f3-error}
\big|\EE[\widetilde{f}_3(\xi_x)] - \EE[f_3(\xi_x)]\big|
= \big|\EE[(\widetilde{f}_3-f_3)(\xi_x)\ind_{\{\xi_x>7/8\}}]\big|
\leq \|\widetilde{f}_3-f_3\|_{\infty} \, \PP(\xi_x>7/8).
\end{equation}
A Chernoff bound (as in \eqref{eq:tail.bound.xi.x}, with threshold $7/8$ instead of $1$) yields
\[
\PP(\xi_x>7/8)\leq C\exp(-c/b)
\]
uniformly over $x\in[0,1/2]$ for some constants $c,C>0$. Hence, the right-hand side of \eqref{eq:tilde-f3-error} is $\OO(\exp(-c/b))$ uniformly in $x\in[0,1/2]$.
Therefore it suffices to derive a lower bound for $\EE[f_3(\xi_x)]-f_3(x)$, since the above error term is negligible as $b\to 0$.

Since $f_3(t) = \{1 + \e \cdot (2t-1)\} \ind_{[0,1]}(t)$, we have
\[
\EE[f_3(\xi_x)] = \PP(\xi_x \leq 1) + \e\left\{2 \EE(\xi_x \ind_{\{\xi_x \leq 1\}}) - \PP(\xi_x \leq 1)\right\}.
\]
Hence, using $f_3(x) = 1 + \e \cdot (2x-1)$ and the identity $\EE(\xi_x) = x + b$, we obtain
\begin{align*}
\EE[f_3(\xi_x)] - f_3(x)
&= (1 - \e) \, \{\PP(\xi_x \leq 1)-1\} + 2 \e\{\EE(\xi_x \ind_{\{\xi_x \leq 1\}})-x\} \\
&= -(1 - \e) \, \PP(\xi_x > 1) + 2 \e\left\{b - \EE(\xi_x \ind_{\{\xi_x > 1\}})\right\} \\
&\geq 2 \e b -2 \e \, \EE(\xi_x \ind_{\{\xi_x > 1\}}) - \PP(\xi_x > 1).
\end{align*}
We now bound the tail probability $\PP(\xi_x > 1)$ uniformly over $x\in [0,1/2]$. For $\lambda = 1/(2 b)$ (so that $\lambda < 1/b$, which implies the finiteness of the moment generating function), Chernoff's inequality yields
\begin{equation}\label{eq:tail.bound.xi.x}
\PP(\xi_x > 1)
\leq e^{-\lambda}\EE(e^{\lambda\xi_x}) = e^{-1/(2 b)}(1 - b\lambda)^{-(x/b + 1)}
= e^{-1/(2 b)} 2^{x/b + 1} \leq 2 \exp\left(-\frac{1 - \ln 2}{2 b}\right).
\end{equation}
Next, by Cauchy-Schwarz,
\[
\EE(\xi_x \ind_{\{\xi_x > 1\}}) \leq \{\EE(\xi_x^2)\}^{1/2} \PP(\xi_x > 1)^{1/2}.
\]
Using $\EE(\xi_x^2) = \Var(\xi_x) + \{\EE(\xi_x)\}^2 = (xb + b^2) + (x + b)^2 \leq C$ for $x\in [0,1/2]$ and $b \leq 1$, we get $\EE(\xi_x \ind_{\{\xi_x > 1\}}) \leq C \, \exp(-c/b)$ for some $c > 0$. Therefore, for $b$ small enough,
\[
\EE[f_3(\xi_x)] - f_3(x) \geq \e b, \quad x\in [0,1/2].
\]
Combining this with \eqref{eq:tilde-f3-error} yields, for $b$ small enough,
\[
\EE_{\widetilde{f}_3}[\hat{f}_{n,b}(x)] - \widetilde{f}_3(x)
= \EE[\widetilde{f}_3(\xi_x)] - f_3(x)
\geq \e b - C\exp(-c/b) \geq \frac{\e}{2}b,
\quad x\in[0,1/2].
\]
Integrating over $x\in [0,1/2]$ yields
\[
\|\EE_{\widetilde{f}_3}[\hat{f}_{n,b}] - \widetilde{f}_3\|_{L^1([0,1])}
\geq \int_0^{1/2} \big|\EE_{\widetilde{f}_3}[\hat{f}_{n,b}(x)] - \widetilde{f}_3(x)\big| \rd x
\geq \frac{\e}{4} b,
\]
which is $(a)$ with $c_1 = \e/4$.

\bigskip
\textbf{Proof of $(b)$.}
Fix $x\in [1/4,1/2]$. Under $\widetilde{f}_0$, the variable $\zeta_1(x) \leqdef K_b(x,X_1)$ satisfies
\[
\EE_{\widetilde{f}_0}[\zeta_1(x)] = \int_0^1 K_b(x,t)\widetilde{f}_0(t) \, \rd t, \quad
\EE_{\widetilde{f}_0}[\zeta_1(x)^2] = \int_0^1 K_b(x,t)^2\widetilde{f}_0(t) \, \rd t.
\]
Let $Y_1(x) = \zeta_1(x) - \EE_{\widetilde{f}_0}[\zeta_1(x)]$ and set $\sigma_x^2 \leqdef \Var_{\widetilde{f}_0}\{Y_1(x)\} = \Var_{\widetilde{f}_0}\{\zeta_1(x)\}$. Also let
\[
Z_n(x) \leqdef \hat{f}_{n,b}(x) - \EE_{\widetilde{f}_0}[\hat{f}_{n,b}(x)] = \frac{1}{n} \sum_{i=1}^n Y_i(x),
\]
with iid copies $Y_i(x)$ of $Y_1(x)$.

\medskip
\emph{Lower bound on $\sigma_x^2$.}
Introduce the Stirling ratio
\begin{equation}\label{eq:Stirling.ratio}
R(u) \leqdef \frac{\sqrt{2\pi} \, e^{-u} u^{u + 1/2}}{\Gamma(u + 1)}, \quad u \geq 0.
\end{equation}
Substituting $\Gamma(u + 1) = \sqrt{2\pi} e^{-u}u^{u + 1/2}/R(u)$ into~\eqref{eq:Bb.def} with $u = x/b$ and $u = 2x/b$ yields
\begin{equation}\label{eq:Bb-R}
\int_0^{\infty} K_b(x,t)^2 \rd t \equiv B_b(x) = \frac{1}{2\sqrt{\pi}} \, b^{-1/2} x^{-1/2} \frac{R(x/b)^2}{R(2x/b)}.
\end{equation}
Since $R$ is increasing and $R(u) \leq 1$ for $u \geq 1$, for $b \leq 1/4$ we have $x/b \geq 1$ on $x\in [1/4,1/2]$ and thus $R(x/b)^2 / R(2x/b) \geq R(1)^2$. Hence, for $b \leq 1/4$ and all $x\in [1/4,1/2]$,
\[
B_b(x) \geq c \, b^{-1/2}
\]
for some constant $c > 0$.

Next, since $t\mapsto K_b(x,t)$ is decreasing on $[x,\infty)$ when $x \leq 1/2$, we have
\[
\int_{7/8}^{\infty} K_b(x,t)^2 \rd t \leq K_b(x,7/8) \int_{7/8}^{\infty} K_b(x,t) \rd t = K_b(x,7/8)\PP(\xi_x > 7/8).
\]
Both factors are $\exp(-c/b)$ uniformly for $x\in [1/4,1/2]$ by a Chernoff bound analogous to \eqref{eq:tail.bound.xi.x} (with threshold $7/8$), hence $\int_{7/8}^{\infty} K_b(x,t)^2 \rd t \leq \exp(-c/b)$. Therefore, for $b$ small enough,
\[
\int_0^{7/8} K_b(x,t)^2 \rd t
= B_b(x) - \int_{7/8}^{\infty} K_b(x,t)^2 \rd t
\geq \frac{1}{2} B_b(x) \geq c \, b^{-1/2}.
\]
Since $\widetilde{f}_0(t)=f_0(t)=1$ for $t\in[0,7/8]$, it follows that, for $b$ small enough,
\[
\EE_{\widetilde{f}_0}[\zeta_1(x)^2] = \int_0^1 K_b(x,t)^2\widetilde{f}_0(t) \, \rd t
\geq \int_0^{7/8} K_b(x,t)^2 \rd t \geq c \, b^{-1/2}.
\]
Moreover, since $\widetilde{f}_0\in\Sigma(\beta,\widetilde{L})$, we have $\|\widetilde{f}_0\|_{\infty}\leq \widetilde{L}$ and thus
\[
\big\{\EE_{\widetilde{f}_0}[\zeta_1(x)]\big\}^2 \leq \|\widetilde{f}_0\|_{\infty}^2 \leq \widetilde{L}^2,
\]
which is negligible compared to $b^{-1/2}$ as $b\to 0$. Hence, for $b$ small enough,
\begin{equation}\label{eq:sigma-lower}
\sigma_x^2 \equiv \Var_{\widetilde{f}_0}\{\zeta_1(x)\} \geq c \, b^{-1/2}, \quad x\in [1/4,1/2].
\end{equation}

\medskip
\emph{A Paley-Zygmund lower bound for $\EE_{\widetilde{f}_0}(|Z_n(x)|)$.}
Let
\[
M_x \leqdef \|Y_1(x)\|_{\infty} \leq 2 \sup_{t\in [0,1]} K_b(x,t) = 2K_b(x,x).
\]
Then $K_b(x,x) \leq C \, b^{-1/2}$ uniformly over $x\in [1/4,1/2]$ by \eqref{eq:sup-local}, so $M_x^2 \leq C \, b^{-1}$.

For the fourth moment, using independence and centering,
\[
\EE_{\widetilde{f}_0}[Z_n(x)^4]
= \frac{1}{n^4}\EE_{\widetilde{f}_0}\left[\left(\sum_{i=1}^n Y_i(x)\right)^4\right]
= \frac{1}{n^4}\left\{n \EE_{\widetilde{f}_0}\big[Y_1(x)^4\big] + 3n(n-1)\sigma_x^4\right\}.
\]
Moreover, $Y_1(x)^4 \leq M_x^2 Y_1(x)^2$, hence $\EE_{\widetilde{f}_0}[Y_1(x)^4] \leq M_x^2\sigma_x^2$. Therefore,
\[
\EE_{\widetilde{f}_0}[Z_n(x)^4] \leq \frac{M_x^2\sigma_x^2}{n^3} + \frac{3\sigma_x^4}{n^2}.
\]
Let $W = Z_n(x)^2$. Then $\EE_{\widetilde{f}_0}[W] = \EE_{\widetilde{f}_0}[Z_n(x)^2] = \sigma_x^2/n$ and $\EE_{\widetilde{f}_0}[W^2] = \EE_{\widetilde{f}_0}[Z_n(x)^4]$. By Paley-Zygmund with $\theta = 1/2$,
\begin{equation}\label{eq:to.bound.below}
\PP_{\widetilde{f}_0}\left(W \geq \frac{1}{2}\EE_{\widetilde{f}_0}[W]\right)
\geq \frac{(1 - \theta)^2(\EE_{\widetilde{f}_0}[W])^2}{\EE_{\widetilde{f}_0}[W^2]}
\geq \frac{c}{3 + M_x^2/(n\sigma_x^2)}.
\end{equation}
Using $M_x^2/(n\sigma_x^2) \leq C/(n b^{1/2})$ (from $M_x^2 \leq Cb^{-1}$ and~\eqref{eq:sigma-lower}), we find that if $n b^{1/2} \geq c_0$ then the above probability in \eqref{eq:to.bound.below} is bounded below by a positive constant. Hence, for $n b^{1/2} \geq c_0$,
\[
\EE_{\widetilde{f}_0}(|Z_n(x)|)
\geq \sqrt{\frac{1}{2}\EE_{\widetilde{f}_0}[Z_n(x)^2]} \, \PP_{\widetilde{f}_0}\left(|Z_n(x)| \geq \sqrt{\tfrac{1}{2} \EE_{\widetilde{f}_0}[Z_n(x)^2]}\right)
\geq c \, \frac{\sigma_x}{\sqrt{n}}
\geq c \, n^{-1/2}b^{-1/4},
\]
uniformly over $x\in [1/4,1/2]$, by~\eqref{eq:sigma-lower}. Integrating in $x$ over an interval of length $1/4$ proves the first inequality in $(b)$.

Finally, for $x\in [1/4,1/2]$, one has $\widetilde{f}_0(x)=f_0(x)=1$ and
\[
\EE_{\widetilde{f}_0}[\hat{f}_{n,b}(x)] = \int_0^1 K_b(x,t)\widetilde{f}_0(t) \, \rd t
= \int_0^1 K_b(x,t) \, \rd t + \int_{7/8}^1 K_b(x,t)\{\widetilde{f}_0(t)-1\} \, \rd t.
\]
Therefore,
\[
\big|\EE_{\widetilde{f}_0}[\hat{f}_{n,b}(x)] - \widetilde{f}_0(x)\big|
\leq \PP(\xi_x>1) + \|\widetilde{f}_0-1\|_{\infty} \, \PP(\xi_x>7/8)
\leq \exp(-c_0/b),
\]
for some $c_0>0$ and all $b$ small enough, using Chernoff bounds as above. This yields the last inequality in $(b)$.
\end{proof}

We now give the proof of Proposition~\ref{prop:2}. Fix $p\in [1,\infty)$, $\beta\in (2,\infty)$ and $\widetilde{L} > 1$. Let $(b_n)_{n\in \N}\subseteq (0,1)$ be arbitrary and set $b = b_n$.

By~\eqref{eq:Lp-to-L1} and the fact that $\{\widetilde{f}_0,\widetilde{f}_3\}\subseteq \Sigma(\beta,\widetilde{L})$, we have
\[
\begin{aligned}
R_n\{\hat{f}_{n,b},\Sigma(\beta,\widetilde{L})\}
&\geq \frac{1}{2}\left(R_n(\hat{f}_{n,b},\widetilde{f}_0) + R_n(\hat{f}_{n,b},\widetilde{f}_3)\right) \\
&\geq \frac{1}{2}\left(\EE_{\widetilde{f}_0}(\|\hat{f}_{n,b} - \widetilde{f}_0\|_{L^1([0,1])}) + \EE_{\widetilde{f}_3}(\|\hat{f}_{n,b} - \widetilde{f}_3\|_{L^1([0,1])})\right).
\end{aligned}
\]
Moreover, since $\|\cdot\|_{L^1([0,1])}$ is convex, Jensen's inequality yields
\[
\EE_{\widetilde{f}_3}(\|\hat{f}_{n,b} - \widetilde{f}_3\|_{L^1([0,1])}) \geq \|\EE_{\widetilde{f}_3}[\hat{f}_{n,b}] - \widetilde{f}_3\|_{L^1([0,1])},
\]
so
\begin{equation}\label{eq:sup-lower}
R_n\{\hat{f}_{n,b},\Sigma(\beta,\widetilde{L})\} \geq \frac{1}{2}\left(\EE_{\widetilde{f}_0}(\|\hat{f}_{n,b} - \widetilde{f}_0\|_{L^1([0,1])}) + \|\EE_{\widetilde{f}_3}[\hat{f}_{n,b}] - \widetilde{f}_3\|_{L^1([0,1])}\right).
\end{equation}

To handle the $\liminf$ in Proposition~\ref{prop:2}, set
\[
a_n \leqdef \frac{R_n\{\hat{f}_{n,b_n},\Sigma(\beta,\widetilde{L})\}}{r_n\{\Sigma(\beta,\widetilde{L})\}}.
\]
Let $(n_k)_{k\in\N}$ be a subsequence such that $a_{n_k}\to \liminf_{n\to\infty} a_n$. It suffices to prove that $a_{n_k}\to +\infty$. For notational simplicity, we relabel the subsequence and write $n$ for $n_k$ and $b_n$ for $b_{n_k}$ below.

We now distinguish three cases.

\medskip
\emph{Case 1: $b_n$ does not tend to $0$}
Passing to a further subsequence, we may assume $b_n\to b_\star\in (0,1]$. Then
\[
\begin{aligned}
\|\EE_{\widetilde{f}_3}[\hat{f}_{n,b_n}] - \widetilde{f}_3\|_{L^1([0,1])}
&= \left\|\int_0^1 K_{b_n}(\cdot,t)\widetilde{f}_3(t) \, \rd t - \widetilde{f}_3\right\|_{L^1([0,1])} \\
&\longrightarrow \left\|\int_0^1 K_{b_\star}(\cdot,t)\widetilde{f}_3(t) \, \rd t - \widetilde{f}_3\right\|_{L^1([0,1])},
\end{aligned}
\]
by dominated convergence. Since $b_\star>0$, the limit above is strictly positive. Hence the right-hand side of~\eqref{eq:sup-lower} is bounded away from $0$ along this subsequence, and since $r_n\{\Sigma(\beta,\widetilde{L})\}\to 0$, we obtain $a_n\to +\infty$.

\medskip
\emph{Case 2: $b_n\to 0$ but $n b_n^{1/2}$ does not tend to $+\infty$.}
Passing to a further subsequence, we may assume $s_n \leqdef n b_n^{1/2}\leq M$ for all $n$ and some $M<\infty$. Fix $x\in [1/4,1/2]$ and set
\[
\delta_n \leqdef \sqrt{C_0 \ln(1/b_n)},
\]
where $C_0>0$ is a large numerical constant to be chosen below. Consider the event
\[
A_n(x) \leqdef \bigcap_{i=1}^n \{|X_i-x|>\delta_n b_n^{1/2}\}.
\]
For $n$ large enough, the interval $\{u:|u-x|\leq \delta_n b_n^{1/2}\}$ is contained in $[0,7/8]$, where $\widetilde{f}_0\equiv 1$, so
\[
\PP_{\widetilde{f}_0}\{A_n(x)\} = \big(1-2\delta_n b_n^{1/2}\big)^n \geq \exp\{-4 \delta_n n b_n^{1/2}\} = \exp\{-4\delta_n s_n\}.
\]
On $A_n(x)$, since $u\mapsto K_{b_n}(x,u)$ is unimodal with mode at $u=x$ (increasing on $[0,x]$ and decreasing on $[x,\infty)$), we have
\[
\hat{f}_{n,b_n}(x)\leq \sup_{\{|u-x|\geq \delta_n b_n^{1/2}\}}K_{b_n}(x,u)
= \max\Big\{K_{b_n}(x,x-\delta_n b_n^{1/2}),\,K_{b_n}(x,x+\delta_n b_n^{1/2})\Big\}.
\]
Write, for any $\delta\in\R$ such that $x+\delta b^{1/2}>0$,
\[
K_b(x,x + \delta b^{1/2}) = K_b(x,x) Q_{b,\delta}(x), \quad
Q_{b,\delta}(x) \leqdef \exp\left\{\frac{x}{b}\ln\left(1 + \frac{\delta b^{1/2}}{x}\right)-\frac{\delta}{b^{1/2}}\right\},
\]
as in the proof of Lemma~\ref{lem:K-local-lower} in Appendix~\ref{app}. Since $\delta_n b_n^{1/2}\to 0$ and $\delta_n^3 \hspace{0.2mm} b_n^{1/2}\to 0$ under $b_n\to 0$, the same Taylor expansion yields $\ln\{ Q_{b_n,\pm \delta_n}(x)\} \leq -c \, \delta_n^2$ for all $n$ large enough and all $x\in[1/4,1/2]$. Using $K_{b_n}(x,x)\leq C b_n^{-1/2}$ (see \eqref{eq:sup-local} with $x\in[1/4,1/2]$), we obtain for all $n$ large enough,
\begin{equation}\label{eq:last.display}
\max\Big\{K_{b_n}(x,x-\delta_n b_n^{1/2}),\,K_{b_n}(x,x+\delta_n b_n^{1/2})\Big\}
\leq C b_n^{-1/2}\exp(-c \, \delta_n^2)
\leq C \, b_n^{c \, C_0 - 1/2}.
\end{equation}
By choosing $C_0$ large enough so that $c \, C_0 - 1/2>0$, the last display \eqref{eq:last.display} is at most $1/2$ for all $n$ large enough. Since $\widetilde{f}_0(x)=1$ on $[1/4,1/2]$, it follows that, for all $n$ large enough,
\[
\EE_{\widetilde{f}_0}\big[|\hat{f}_{n,b_n}(x)-\widetilde{f}_0(x)|\big]
\geq \frac{1}{2} \, \PP_{\widetilde{f}_0}\{A_n(x)\}
\geq \frac{1}{2} \, \exp\{-4 \delta_n s_n\}.
\]
Integrating over $x\in[1/4,1/2]$ and using~\eqref{eq:sup-lower} yields
\[
R_n\{\hat{f}_{n,b_n},\Sigma(\beta,\widetilde{L})\} \geq c \exp\{-C \, \delta_n s_n\}.
\]
Since $b_n=s_n^2/n^2$, we have $\delta_n=\sqrt{C_0\ln(1/b_n)}=\sqrt{2C_0\ln(n/s_n)}$, so
\[
R_n\{\hat{f}_{n,b_n},\Sigma(\beta,\widetilde{L})\} \geq c \, \exp\{-\widetilde{C} s_n \sqrt{\ln(n/s_n)}\}.
\]
Since $s\mapsto s\sqrt{\ln(n/s)}$ is increasing on $(0,n/\sqrt{e})$, and $s_n \leq M < n/\sqrt{e}$ for $n$ large enough, using the minimax rate in~\eqref{eq:minimax.rate} yields
\[
\frac{R_n\{\hat{f}_{n,b_n},\Sigma(\beta,\widetilde{L})\}}{r_n\{\Sigma(\beta,\widetilde{L})\}}
\geq c \, n^{\beta/(2\beta + 1)} \exp\{-\widetilde{C} M \sqrt{\ln(n/M)}\} \longrightarrow +\infty.
\]
Hence $a_n\to +\infty$ also in this subcase.

\medskip
\emph{Case 3: $b_n\to 0$ and $n b_n^{1/2}\to + \infty$.}
Then for all $n$ large enough, $b_n\in (0,b_0]$ and $n b_n^{1/2} \geq c_0$, so Lemma~\ref{lem:prop2-key} applies. From~\eqref{eq:sup-lower} and Lemma~\ref{lem:prop2-key}(a) and~(b),
\[
R_n\{\hat{f}_{n,b_n},\Sigma(\beta,\widetilde{L})\} \geq c \, (b_n + n^{-1/2} b_n^{-1/4}) - \exp(-c/b_n).
\]
Since $\exp(-c/b_n) = o(b_n)$ as $b_n\to 0$, for $n$ large enough, we get
\[
R_n\{\hat{f}_{n,b_n},\Sigma(\beta,\widetilde{L})\} \geq c \, (b_n + n^{-1/2}b_n^{-1/4}).
\]
For every $n$ and every $b > 0$, writing $u = b^{1/4}$ gives
\[
b + n^{-1/2}b^{-1/4} = u^4 + \frac{1}{\sqrt{n} u} = n^{-2/5}\left((u n^{1/10})^4 + (u n^{1/10})^{-1}\right) \geq c \, n^{-2/5},
\]
because $\inf_{t > 0}(t^4 + t^{-1}) > 0$. Hence,
\begin{equation}\label{eq:rate-lower-25}
R_n\{\hat{f}_{n,b_n},\Sigma(\beta,\widetilde{L})\} \geq c \, n^{-2/5}.
\end{equation}

Since $\beta > 2$ implies $\beta/(2\beta + 1) > 2/5$, we have $n^{-2/5}/n^{-\beta/(2\beta + 1)}\to \infty$. Combining with~\eqref{eq:rate-lower-25} yields
\[
\liminf_{n\to \infty}\frac{R_n\{\hat{f}_{n,b_n},\Sigma(\beta,\widetilde{L})\}}{r_n\{\Sigma(\beta,\widetilde{L})\}}
\geq \liminf_{n\to \infty} c \, n^{\beta/(2\beta + 1)-2/5}
= +\infty.
\]
This completes the proof of Proposition~\ref{prop:2}.

\subsection{Proof of Proposition~\ref{prop:3}}\label{sec:proof.prop.3}

First, we state two auxiliary technical lemmas, whose proofs are deferred to Appendix~\ref{app}.

\begin{lemma}\label{lem:nonminimax-var}
Fix $p\in [2,\infty)$. Let $\widetilde{f}_0$ be as in the proof of Proposition~\ref{prop:2}. There exist $b_0\in (0,1)$ and $c > 0$ such that for all $b\in (0,b_0]$ and all $n\in \N$,
\[
\EE_{\widetilde{f}_0}\big(\|\hat{f}_{n,b} - \widetilde{f}_0\|_p^p\big) \geq c \, \frac{\mathcal{I}(b,p)}{(n b^{1/2})^{p/2}}, \quad \mathcal{I}(b,p) \leqdef \int_b^{1/2} x^{-p/4} \rd x.
\]
Consequently,
\[
R_n(\hat{f}_{n,b},\widetilde{f}_0) \geq c \, \frac{\{\mathcal{I}(b,p)\}^{1/p}}{(n b^{1/2})^{1/2}}.
\]
\end{lemma}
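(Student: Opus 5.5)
The approach is to lower bound the $L^p$ risk pointwise and then integrate over $x\in[b,1/2]$, using the pointwise variance lower bound $\sigma_x^2 \gtrsim b^{-1/2}x^{-1/2}$ in the interior. Write $\hat f_{n,b}(x)-\widetilde f_0(x) = Z_n(x) + \{\EE_{\widetilde f_0}[\hat f_{n,b}(x)]-\widetilde f_0(x)\}$.

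\textbf{Step 1: reduce to the centered fluctuation.} For $p\geq 1$ and a random variable $Z$ with $\EE Z=0$, Jensen's inequality applied conditionally gives $\EE|Z+a|^p\geq \EE|Z|^p$ for any constant $a$. Indeed, $|a|^p=|\EE(Z+a)|^p\le \EE|Z+a|^p$ handles the bias part, and a symmetrization argument handles the fluctuation part. A cleaner way to see the second claim is that $a\mapsto \EE|Z+a|^p$ is convex with zero derivative at $a=0$ when $Z$ is centered. This holds only for symmetric $Z$, so instead I would use $\EE|Z+a|^p\geq 2^{1-p}\EE|Z|^p - |a|^p$ on the region where the bias is small. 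Alternatively, I would use $\EE|Z+a|^p\ge (\EE|Z+a|^2)^{p/2}\ge (\Var Z)^{p/2}$ for $p\ge2$, which is cleaner and needs no bias control at all. I will use the latter: $\EE|\hat f_{n,b}(x)-\widetilde f_0(x)|^p\ge \{\sigma_x^2/n\}^{p/2}$.

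\textbf{Step 2: pointwise variance lower bound on $[b,1/2]$.} Extend the argument leading to \eqref{eq:sigma-lower} from $[1/4,1/2]$ to all $x\in[b,1/2]$. By \eqref{eq:Bb-R}, for $x/b\ge 1$ one has $B_b(x)\ge c\,b^{-1/2}x^{-1/2}$, because $R$ is increasing and bounded by $1$, so $R(x/b)^2/R(2x/b)\ge R(1)^2$. Next, subtract the tail $\int_{7/8}^\infty K_b(x,t)^2\rd t\le K_b(x,7/8)\PP(\xi_x>7/8)\le \exp(-c/b)$ uniformly for $x\le 1/2$. This uses the Chernoff bound \eqref{eq:tail.bound.xi.x}, and $K_b(x,7/8)\le K_b(x,x)\le Cb^{-1}$ so that the polynomial factor is absorbed. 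The tail is then negligible against $b^{-1/2}x^{-1/2}\ge b^{-1/2}$. Since $\widetilde f_0=1$ on $[0,7/8]$, this gives $\EE[\zeta_1(x)^2]\ge c\,b^{-1/2}x^{-1/2}$. Finally, subtract $(\EE\zeta_1(x))^2\le \widetilde L^2$, which is negligible for $b\le b_0$, to obtain $\sigma_x^2\ge c\,b^{-1/2}x^{-1/2}$ uniformly on $[b,1/2]$.

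\textbf{Step 3: integrate.} Combining Steps 1 and 2, for $x\in[b,1/2]$ we get $\EE|\hat f_{n,b}(x)-\widetilde f_0(x)|^p\ge c\,(nb^{1/2})^{-p/2}x^{-p/4}$. Integrating over $[b,1/2]\subseteq[0,\infty)$ yields $\EE\|\hat f_{n,b}-\widetilde f_0\|_p^p\ge c\,\mathcal I(b,p)/(nb^{1/2})^{p/2}$ for all $n$, since no condition on $n$ was used. Taking $p$th roots gives the stated consequence.

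The main obstacle is the uniformity of Step 2 down to $x=b$. Two things must be checked there: that the ratio $R(x/b)^2/R(2x/b)$ stays bounded below all the way to $x/b=1$, and that the Chernoff tail estimate stays exponentially small uniformly in $x\le1/2$. For the latter, with $\lambda=1/(2b)$ one gets $2^{x/b+1}e^{-7/(16b)}\le 2e^{-(7/16-\ln2/2)/b}$, and the exponent is positive. Both are routine given earlier displays.
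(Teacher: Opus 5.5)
Your proposal is correct and follows the paper's proof essentially step for step: reduce the pointwise $p$th moment to $(\Var_{\widetilde f_0}\hat f_{n,b}(x))^{p/2}$, lower bound $\EE K_b(x,X_1)^2$ on $[b,1/2]$ via \eqref{eq:Bb-R} using $R(x/b)^2/R(2x/b)\geq R(1)^2$ and an exponentially small tail on $[7/8,\infty)$, subtract the bounded squared mean, and integrate. The only difference is cosmetic: you use $\EE|U|^p\geq(\EE U^2)^{p/2}\geq(\Var U)^{p/2}$ directly, whereas the paper uses $\EE|U|^p\geq 2^{-p}\EE|U-\EE U|^p$ followed by Jensen, so your route avoids the $2^{-p}$ factor. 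In the write-up, delete the abandoned opening claim that $\EE|Z+a|^p\geq\EE|Z|^p$ for every centered $Z$, since it fails in general for $p\neq 2$ unless $Z$ is symmetric.
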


\begin{lemma}\label{lem:nonminimax-bias}
Fix $\beta\in (0,2]$. There exist $\widetilde{L} > 1$, $b_1\in (0,1)$, and $c > 0$ such that, for every $b\in (0,b_1]$, there exists a density $\widetilde{f}_{\beta,b}\in \Sigma(\beta,\widetilde{L})$ satisfying
\[
\|\EE_{\widetilde{f}_{\beta,b}}[\hat{f}_{n,b}] - \widetilde{f}_{\beta,b}\|_{L^1([0,1])} \geq c \, b^{\beta/2},
\]
and therefore, for every $p\in [1,\infty)$,
\[
R_n(\hat{f}_{n,b},\widetilde{f}_{\beta,b}) \geq c \, b^{\beta/2}.
\]
\end{lemma}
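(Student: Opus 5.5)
The plan is to perturb the smoothed uniform density $\widetilde{f}_0$ from the proof of Proposition~\ref{prop:2} by a small oscillation on $[1/4,1/2]$. The oscillation has wavelength of order $h \leqdef b^{1/2}$, the typical fluctuation scale of $\xi_x$ there, and amplitude of order $h^{\beta}$. Smoothing at this scale damps the oscillation by a fixed factor, which leaves a bias of order $h^\beta=b^{\beta/2}$.

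\emph{Construction.} Fix a $C^\infty$ cutoff $\chi\geq 0$ supported in $[1/4,1/2]$ with $\chi\equiv 1$ on $[5/16,7/16]$. Set $\omega \leqdef 2\pi/(\kappa h)$ for a constant $\kappa>0$ to be fixed, and
\[
g_b(x) \leqdef a\, h^{\beta}\chi(x)\cos(\omega x), \qquad \widetilde{f}_{\beta,b} \leqdef \frac{\widetilde{f}_0 + g_b}{1+\int_0^1 g_b},
\]
with $a>0$ small.

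\emph{Membership in $\Sigma(\beta,\widetilde L)$.} We have $|g_b^{(k)}|\leq C a\, h^{\beta-k}$ for $k\leq 2$. Interpolating these bounds for $k=m$ and $k=m+1$ gives a $(\beta-m)$-H\"older seminorm of $g_b^{(m)}$ bounded by $Ca$, uniformly in $b$. Moreover, $\int_0^1 \chi(x)\cos(\omega x)\,\rd x=\OO(\omega^{-N})$ for every $N$, by repeated integration by parts. Hence the normalization factor is $1+\OO(h^{\beta+N})$. Positivity holds since $\widetilde f_0=1$ on $[1/4,1/2]$ and $a h^\beta$ is small. Therefore $\widetilde{f}_{\beta,b}\in\Sigma(\beta,\widetilde L)$ for some $\widetilde L$ independent of $b$.

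\emph{Bias computation.} By linearity and the last assertion of Lemma~\ref{lem:prop2-key}(b), the contribution of $\widetilde f_0$ to the bias on $[1/4,1/2]$ is $\OO(e^{-c_0/b})$. The normalization contributes $\OO(h^{\beta+N})$. It therefore suffices to bound $|\EE[g_b(\xi_x)]-g_b(x)|$ from below. First replace $\chi(\xi_x)$ by $\chi(x)$. The error is at most $Ca h^\beta\,\EE|\xi_x-x| \leq C a h^{\beta} b^{1/2}$, which is negligible.

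Then use the exact characteristic function of the gamma law:
\[
\EE\, e^{i\omega\xi_x}=(1-i\omega b)^{-(x/b+1)}.
\]
Since $\omega b = 2\pi b^{1/2}/\kappa\to 0$, its modulus is
\[
\rho_b(x)=(1+\omega^2b^2)^{-(x/b+1)/2}\to \exp\{-2\pi^2x/\kappa^2\},
\]
uniformly in $x\in[1/4,1/2]$. So $\rho_b(x)\leq \rho_0<1$ for a fixed $\kappa$ and all small $b$. Writing the phase as $\omega x+\varphi_b(x)$, we get
\[
\EE[\cos(\omega\xi_x)]-\cos(\omega x)=\mathrm{Re}\bigl[e^{i\omega x}\{\rho_b(x)e^{i\varphi_b(x)}-1\}\bigr],
\]
which is a sinusoid in $\omega x$ with amplitude $|1-\rho_be^{i\varphi_b}|\geq 1-\rho_0$. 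The amplitude and phase vary on the $\OO(1)$ scale, while $\omega x$ runs through roughly $1/h$ periods on $[5/16,7/16]$. Averaging $|\cos(\cdot)|$ over these periods gives
\[
\int_{5/16}^{7/16}\bigl|\EE[g_b(\xi_x)]-g_b(x)\bigr|\,\rd x\geq c\,a(1-\rho_0)h^{\beta}-o(h^\beta).
\]
This yields the $L^1([0,1])$ bound $c\,b^{\beta/2}$. The risk bound for every $p$ then follows from \eqref{eq:Lp-to-L1} and Jensen's inequality, as in \eqref{eq:sup-lower}.

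\emph{Main obstacle.} The delicate step is the uniform control of the damping factor $\rho_b(x)<\rho_0<1$ together with a slowly varying phase, so that the averaging argument is legitimate. I would handle it with the explicit formula above, which avoids any normal approximation. Checking the H\"older constant uniformly in $b$ for $\beta\in(1,2]$ is routine by comparison.
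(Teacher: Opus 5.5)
Your proof is correct, and it takes a genuinely different route from the paper's.

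\textbf{The paper's route.} It builds $f_{\beta,b}$ from $2N\asymp b^{-1/2}$ alternating polynomial bumps $\psi\{(x-t^{(k)})/(3b^{1/2})\}$ of height $\asymp b^{\beta/2}$. Mass cancellation is exact because there are equally many $\pm$ signs, and a separate, omitted $C^2$ modification on $[7/8,1]$ restores endpoint compatibility. The bias bound is then argued in real space. For $x$ within $\e b^{1/2}$ of a positive peak, the kernel mass on an annulus at distance $\asymp b^{1/2}$ is bounded below using Lemma~\ref{lem:K-local-lower} (Stirling ratio plus a Taylor expansion of $\ln K_b$) together with unimodality. The competing term $V(x)$ is controlled because $x$ sits near the global maximum of $f_{\beta,b}$.

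\textbf{Your route.} You perturb the already endpoint-compatible $\widetilde f_0$ by a cut-off cosine, so no new endpoint smoothing is needed. The normalization is $1+\OO(h^{\beta+N})$ by integration by parts. You then compute the smoothed oscillation exactly through $(1-i\omega b)^{-(x/b+1)}$.

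\textbf{Checks.} The H\"older interpolation, positivity, the $\OO(ah^{\beta}b^{1/2})$ cost of freezing $\chi$, and the limit $\rho_b(x)\to\exp(-2\pi^2x/\kappa^2)$ uniformly on $[1/4,1/2]$ all hold. The averaging step is even simpler than you suggest. The phase error is
$\varphi_b(x)=(x/b+1)\arctan(\omega b)-\omega x=(x/b)\{\arctan(\omega b)-\omega b\}+\OO(\omega b)=\OO(b^{1/2})$
uniformly, since $\omega^3b^2\asymp b^{1/2}$. Hence the difference is $-\{1-\rho_b(x)\}\cos(\omega x)+\OO(b^{1/2})$, and you only need $\int_{5/16}^{7/16}|\cos(\omega x)|\,\rd x\to 1/(4\pi)$.

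\textbf{Trade-offs.} Your approach is shorter and gives explicit constants. It also dispenses with both the auxiliary local kernel lemma and the omitted endpoint modification. However, it relies on the closed-form Fourier transform of a gamma law whose shape $x/b+1$ enters as an exponent, which is specific to this kernel. The paper's argument uses only unimodality and a pointwise lower bound on the kernel at scale $b^{1/2}$. It therefore transfers more readily to modified or generalized gamma kernels, or to beta and Dirichlet kernels, where no convenient characteristic function is available.
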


Now, we find a general lower bound on the maximal risk. Set $b_{\ast}\leqdef b_0\wedge b_1\wedge 1/4$. Fix $n\in \N$ and $b\in (0,b_{\ast}]$. Since $\widetilde{f}_0\in \Sigma(\beta,\widetilde{L})$ and $\widetilde{f}_{\beta,b}\in \Sigma(\beta,\widetilde{L})$ (for the explicit definition, see \eqref{eq:fbeta-def} together with the smoothing described in the proof of Lemma~\ref{lem:nonminimax-bias}), one has
\begin{equation}\label{eq:sup-lower-prop3}
R_n\{\hat{f}_{n,b},\Sigma(\beta,\widetilde{L})\} \geq \frac{1}{2}\left\{R_n(\hat{f}_{n,b},\widetilde{f}_0) + R_n(\hat{f}_{n,b},\widetilde{f}_{\beta,b})\right\}.
\end{equation}
Applying Lemmas~\ref{lem:nonminimax-var} and~\ref{lem:nonminimax-bias} to~\eqref{eq:sup-lower-prop3} yields that for all $b \leq b_{\ast}$,
\begin{equation}\label{eq:key-lower}
R_n\{\hat{f}_{n,b},\Sigma(\beta,\widetilde{L})\} \geq c \left\{\frac{\{\mathcal{I}(b,p)\}^{1/p}}{(n b^{1/2})^{1/2}} + b^{\beta/2}\right\}.
\end{equation}

Next, we optimize the lower bound we just found. Fix $p\in [4,\infty)$ and consider the function
\[
\Phi_n(b) \leqdef \frac{\{\mathcal{I}(b,p)\}^{1/p}}{(n b^{1/2})^{1/2}} + b^{\beta/2}, \quad b\in (0,b_{\ast}].
\]
Then~\eqref{eq:key-lower} implies $R_n\{\hat{f}_{n,b},\Sigma(\beta,\widetilde{L})\} \geq c \, \Phi_n(b)$ for all $b\in (0,b_{\ast}]$. We next extend this reduction to arbitrary bandwidth sequences. Since $\widetilde{f}_0(x)=1$ for all $x\in[0,7/8]$, Jensen's inequality gives, for $b\in[b_{\ast},1)$,
\[
R_n\{\hat{f}_{n,b},\Sigma(\beta,\widetilde{L})\} \geq R_n(\hat{f}_{n,b},\widetilde{f}_0) \geq \|\EE_{\widetilde{f}_0}[\hat{f}_{n,b}] - \widetilde{f}_0\|_p.
\]
For $x\in[3/2,2]$, we have $\widetilde{f}_0(x)=0$, and therefore
\[
\EE_{\widetilde{f}_0}[\hat{f}_{n,b}(x)] - \widetilde{f}_0(x) = \int_0^1 K_b(x,t)\widetilde{f}_0(t)\rd t \geq \int_{1/4}^{1/2} K_b(x,t)\rd t \geq c,
\]
where the last inequality is uniform over $x\in[3/2,2]$ and $b\in[b_{\ast},1]$ by continuity and positivity of $K_b(x,t)$ on the compact set $[3/2,2]\times[1/4,1/2]\times[b_{\ast},1]$. Hence
\[
R_n\{\hat{f}_{n,b},\Sigma(\beta,\widetilde{L})\} \geq c, \quad b\in[b_{\ast},1).
\]
On the other hand, $\inf_{b\in(0,b_{\ast}]}\Phi_n(b)\to 0$ as $n\to\infty$, for instance by taking $b=n^{-1/2}$. Thus, after decreasing $c$ and taking $n$ large enough, the preceding two lower bounds imply that, for every $b\in(0,1)$,
\[
R_n\{\hat{f}_{n,b},\Sigma(\beta,\widetilde{L})\} \geq c \inf_{u\in (0,b_{\ast}]}\Phi_n(u).
\]
Consequently, for any bandwidth sequence $(b_n)$ with $b_n\in(0,1)$, eventually,
\begin{equation}\label{eq:inf-bound}
R_n\{\hat{f}_{n,b_n},\Sigma(\beta,\widetilde{L})\} \geq c \inf_{b\in (0,b_{\ast}]}\Phi_n(b).
\end{equation}

\medskip
\emph{Case $p = 4$.}
Here $\mathcal{I}(b,4) = \int_b^{1/2} x^{-1} \rd x = \ln(1/(2 b))$, so for $b$ small enough,
\[
\Phi_n(b) \geq c\left\{\frac{|\ln b|^{1/4}}{(n b^{1/2})^{1/2}} + b^{\beta/2}\right\} = c\left\{n^{-1/2}b^{-1/4}|\ln b|^{1/4} + b^{\beta/2}\right\}.
\]
Set
\[
b_n^{\star} \leqdef n^{-2/(2\beta + 1)}(\ln n)^{1/(2\beta + 1)}.
\]
For $n$ large enough, one has $b_n^{\star}\in (0,b_{\ast}]$. If $b \geq b_n^{\star}$, then
\[
\Phi_n(b) \geq c \, b^{\beta/2} \geq c \, (b_n^{\star})^{\beta/2} = c \, n^{-\beta/(2\beta + 1)}(\ln n)^{\beta/(2(2\beta + 1))}.
\]
If $b < b_n^{\star}$, then $b^{-1/4} \geq (b_n^{\star})^{-1/4} = n^{1/(2(2\beta + 1))}(\ln n)^{-1/(4(2\beta + 1))}$ and $|\ln b| \geq |\ln b_n^{\star}| \geq c\ln n$ for large $n$, hence
\[
n^{-1/2}b^{-1/4}|\ln b|^{1/4}
\geq c \, n^{-1/2}n^{1/(2(2\beta + 1))}(\ln n)^{-1/(4(2\beta + 1))}(\ln n)^{1/4}
= c \, n^{-\beta/(2\beta + 1)}(\ln n)^{\beta/(2(2\beta + 1))}.
\]
Therefore,
\[
\inf_{b\in (0,b_{\ast}]}\Phi_n(b) \geq c \, n^{-\beta/(2\beta + 1)}(\ln n)^{\beta/(2(2\beta + 1))}.
\]
Combining this with~\eqref{eq:inf-bound} and the minimax rate stated in \eqref{eq:minimax.rate} yields
\[
\frac{R_n\{\hat{f}_{n,b_n},\Sigma(\beta,\widetilde{L})\}}{r_n\{\Sigma(\beta,\widetilde{L})\}}
\geq c \, (\ln n)^{\beta/(2(2\beta + 1))} \longrightarrow + \infty,
\]
which proves the claim when $p = 4$.

\medskip
\emph{Case $p > 4$.}
For $b\in (0,b_{\ast}]$,
\[
\mathcal{I}(b,p) = \int_b^{1/2} x^{-p/4} \rd x \geq \int_b^{2 b} x^{-p/4} \rd x = c \, b^{1 - p/4},
\]
so $\{\mathcal{I}(b,p)\}^{1/p} \geq c \, b^{1/p-1/4}$. Hence, for $b \leq b_{\ast}$,
\[
\Phi_n(b) \geq c \, (n^{-1/2}b^{-1/4}b^{1/p-1/4} + b^{\beta/2}) = c \, (n^{-1/2}b^{-1/2 + 1/p} + b^{\beta/2}).
\]
Let $\alpha = \beta/2$ and $\gamma = 1/2-1/p > 0$. Then the right-hand side has the form $c \, (n^{-1/2} b^{-\gamma} + b^{\alpha})$. The minimum is attained at a bandwidth $b$ of order $n^{-1/(2(\alpha + \gamma))}$, so that
\[
\inf_{b\in (0,b_{\ast}]} c \, (n^{-1/2} b^{-\gamma} + b^{\alpha}) \geq c \, n^{-\alpha/(2(\alpha + \gamma))} = c \, n^{-\beta/(2\beta + 2-4/p)}.
\]
Therefore, for $n$ large enough,
\[
\inf_{b\in (0,b_{\ast}]}\Phi_n(b) \geq c \, n^{-\beta/(2\beta + 2-4/p)}.
\]
Since $p > 4$ implies $2\beta + 2-4/p > 2\beta + 1$, one has $n^{\beta/(2\beta + 2-4/p)} = \oo(n^{\beta/(2\beta + 1)})$ and thus, by~\eqref{eq:minimax.rate},
\[
\frac{R_n\{\hat{f}_{n,b_n},\Sigma(\beta,\widetilde{L})\}}{r_n\{\Sigma(\beta,\widetilde{L})\}} \geq c \, \frac{n^{\beta/(2\beta + 1)}}{n^{\beta/(2\beta + 2-4/p)}} \longrightarrow +\infty.
\]
This completes the proof of Proposition~\ref{prop:3}.

\begin{appendices}

\section{Proofs of technical lemmas}\label{app}

\begin{proof}[Proof of Lemma~\ref{lem:nonminimax-var}]
Fix $p\in [2,\infty)$ and let $\widetilde{f}_0$ be as in the proof of Proposition~\ref{prop:2}. Recall in particular that $\widetilde{f}_0(x)=1$ for all $x\in[0,7/8]$, and hence for all $x\in[b,1/2]$ when $b\leq 1/2$.
We use the inequality $\EE[|U|^p] \geq 2^{-p} \, \EE[|U - \EE[U]|^p]$, which holds for any random variable $U$ and $p \geq 1$. Applying this pointwise to $\smash{U = \hat{f}_{n,b}(x) - \widetilde{f}_0(x)}$ yields
\[
\EE_{\widetilde{f}_0}\big[|\hat{f}_{n,b}(x) - \widetilde{f}_0(x)|^p\big] \geq 2^{-p} \, \EE_{\widetilde{f}_0}\big[|\hat{f}_{n,b}(x) - \EE_{\widetilde{f}_0}[\hat{f}_{n,b}(x)]|^p\big].
\]
Since $p \geq 2$, integrating with respect to $x$ and applying Jensen's inequality yield
\begin{equation}\label{eq:var-lower-int}
\EE_{\widetilde{f}_0}\big(\|\hat{f}_{n,b} - \widetilde{f}_0\|_p^p\big) \geq 2^{-p} \int_0^{\infty} \big\{\Var_{\widetilde{f}_0}(\hat{f}_{n,b}(x))\big\}^{p/2} \rd x.
\end{equation}

For each fixed $x\geq 0$,
\[
\Var_{\widetilde{f}_0}(\hat{f}_{n,b}(x)) = \frac{1}{n} \Var_{\widetilde{f}_0}\big(K_b(x,X_1)\big) = \frac{1}{n} \left(\EE_{\widetilde{f}_0}[K_b(x,X_1)^2] - \{\EE_{\widetilde{f}_0}[K_b(x,X_1)]\}^2\right).
\]
Recall the expansion of the squared kernel integral \eqref{eq:Bb-R} derived in the proof of Lemma~\ref{lem:prop2-key}:
\[
\int_0^{\infty} K_b(x,t)^2 \rd t = \frac{1}{2\sqrt{\pi}} \, b^{-1/2} x^{-1/2} \frac{R(x/b)^2}{R(2x/b)},
\]
where $R$ is the Stirling ratio function in \eqref{eq:Stirling.ratio}. For $x\in [b, 1/2]$, we have $x/b \geq 1$. Since $R$ is increasing and positive, the ratio $R(x/b)^2/R(2x/b)$ is bounded from below by a positive constant. Furthermore, as argued in Lemma~\ref{lem:prop2-key}, the integral on the tail $[7/8,\infty)$ is exponentially small.
Thus, for $b$ small enough and all $x\in [b,1/2]$,
\[
\EE_{\widetilde{f}_0}[K_b(x,X_1)^2]
= \int_0^1 K_b(x,t)^2 \widetilde{f}_0(t) \, \rd t
\geq \int_0^{7/8} K_b(x,t)^2 \rd t
\geq c \, b^{-1/2} x^{-1/2}.
\]
Moreover, $\{\EE_{\widetilde{f}_0}[K_b(x,X_1)]\}^2 \leq \|\widetilde{f}_0\|_{\infty}^2 < \infty$ is negligible compared to the second moment above as $b\to 0$. It follows (possibly shrinking $b_0$) that
\[
\Var_{\widetilde{f}_0}(\hat{f}_{n,b}(x)) \geq \frac{c}{n} b^{-1/2} x^{-1/2}, \quad x\in [b,1/2], ~b\in (0,b_0].
\]

Plugging this into~\eqref{eq:var-lower-int} and restricting the integral to $[b,1/2]$ yields
\[
\EE_{\widetilde{f}_0}(\|\hat{f}_{n,b} - \widetilde{f}_0\|_p^p)
\geq c \int_b^{1/2}\left(\frac{b^{-1/2} x^{-1/2}}{n}\right)^{p/2} \rd x
= c \, \frac{1}{(n b^{1/2})^{p/2}} \int_b^{1/2} x^{-p/4} \rd x,
\]
which is the first claim. Taking the $p$th root gives the second.
\end{proof}

Before proving Lemma~\ref{lem:nonminimax-bias}, we need the following local lower bound on the gamma kernel.

\begin{lemma}\label{lem:K-local-lower}
Fix $0 < a_0 < a_1 < 1$ and $\delta\in (0,3)$. Then there exist $b_2\in (0,1)$ and $c > 0$ such that for all $b\in (0,b_2]$ and all $x\in [a_0,a_1]$,
\[
K_b(x,x + \delta b^{1/2}) \geq c \, b^{-1/2}.
\]
\end{lemma}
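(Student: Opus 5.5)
The plan is to write the kernel at the shifted point as its value at the mode times a correction factor, exactly as in the proof of Proposition~\ref{prop:2}:
\[
K_b(x,x+\delta b^{1/2}) = K_b(x,x)\, Q_{b,\delta}(x), \qquad
Q_{b,\delta}(x) = \exp\left\{\frac{x}{b}\ln\left(1+\frac{\delta b^{1/2}}{x}\right) - \frac{\delta}{b^{1/2}}\right\}.
\]
This identity follows directly from \eqref{eq:kernel}, since $t^{x/b}e^{-t/b}$ evaluated at $t=x+\delta b^{1/2}$, divided by its value at $t=x$, is exactly $Q_{b,\delta}(x)$. It then suffices to bound each factor from below, uniformly in $x\in[a_0,a_1]$.

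\textbf{Step 1: the mode value.} With $u=x/b$ we have $K_b(x,x) = R(u)/(\sqrt{2\pi}\, b^{1/2} x^{1/2})$, where $R$ is the Stirling ratio \eqref{eq:Stirling.ratio}. Take $b\leq a_0$, so that $u\geq 1$. Since $R$ is increasing, $R(u)\geq R(1)>0$, and $x^{-1/2}\geq a_1^{-1/2}\geq 1$. Hence
\[
K_b(x,x)\geq c\, b^{-1/2}
\]
uniformly on $[a_0,a_1]$.

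\textbf{Step 2: the correction factor.} Set $y=\delta b^{1/2}/x\in(0,\delta b^{1/2}/a_0]$, which is small once $b$ is small. Use the elementary inequality $\ln(1+y)\geq y - y^2/2$ for $y\geq 0$. Then
\[
\frac{x}{b}\ln(1+y) - \frac{\delta}{b^{1/2}} \geq \frac{x}{b}\Bigl(y-\frac{y^2}{2}\Bigr) - \frac{\delta}{b^{1/2}} = -\frac{\delta^2}{2x} \geq -\frac{9}{2a_0}.
\]
Here the linear terms cancel exactly, and the last bound uses $\delta<3$. Thus $Q_{b,\delta}(x)\geq e^{-9/(2a_0)}$, uniformly in $x\in[a_0,a_1]$ and all $b$ in range. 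Note also that $x+\delta b^{1/2}>0$ trivially.

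\textbf{Step 3: conclusion.} Multiplying the two bounds gives $K_b(x,x+\delta b^{1/2})\geq c\, e^{-9/(2a_0)}\, b^{-1/2}$ for all $b\leq b_2 := a_0\wedge 1/2$ and all $x\in[a_0,a_1]$. The only point needing care is the uniformity in Step~1. It relies on the monotonicity and positivity of $R$ already used in the proof of Lemma~\ref{lem:prop2-key}, or alternatively on the lower bound $\Gamma(u+1)\leq \sqrt{2\pi}\,u^{u+1/2}e^{-u}e^{1/(12u)}$. This is routine, so the lemma has no genuine obstacle. The second-order expansion in Step~2 is the same one invoked, with the opposite sign, for the upper bound $\ln Q_{b,\pm\delta_n}\leq -c\,\delta_n^2$ in Case~2 of Proposition~\ref{prop:2}.
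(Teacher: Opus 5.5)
Your proof is correct and follows essentially the paper's argument: the same factorization $K_b(x,x+\delta b^{1/2}) = K_b(x,x)\,Q_{b,\delta}(x)$, with the mode value bounded below through the Stirling ratio $R$. The differences are minor. You use the exact inequality $\ln(1+y)\geq y-y^2/2$ for $y\geq 0$ and the bound $R(u)\geq R(1)$, where the paper uses the asymptotic expansion $\ln Q_{b,\delta}(x) = -\delta^2/(2x)+\OO(b^{1/2})$ and the limit $R(u)\to 1$. Your version gives explicit constants and makes the bound $Q_{b,\delta}(x)\geq e^{-\delta^2/(2x)}$ hold for every $b>0$.
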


\begin{proof}[Proof of Lemma~\ref{lem:K-local-lower}]
Write
\[
K_b(x,x + \delta b^{1/2}) = K_b(x,x) Q_{b,\delta}(x), \quad
Q_{b,\delta}(x) \leqdef \exp\left\{\frac{x}{b}\ln\left(1 + \frac{\delta b^{1/2}}{x}\right)-\frac{\delta}{b^{1/2}}\right\}.
\]
Using $\ln(1 + u) = u-u^2/2 + \OO(u^3)$ with $u = \delta b^{1/2}/x$ gives, uniformly for $x\in [a_0,a_1]$,
\[
\ln Q_{b,\delta}(x)
= \left(\frac{x}{b}\right)\left(\frac{\delta b^{1/2}}{x}-\frac{\delta^2 b}{2x^2} + \OO(b^{3/2})\right) -\frac{\delta}{b^{1/2}}
= -\frac{\delta^2}{2x} + \OO(b^{1/2}).
\]
Therefore, $Q_{b,\delta}(x)\to \exp\{-\delta^2/(2x)\}$ uniformly in $x\in [a_0,a_1]$ as $b\to 0$, so $Q_{b,\delta}(x) \geq c$ for $b$ small enough. Next, using the Stirling ratio function in \eqref{eq:Stirling.ratio}, Stirling's formula gives
\[
K_b(x,x) = \frac{x^{x/b} e^{-x/b}}{b^{x/b + 1} \Gamma(x/b + 1)} = \frac{R(x/b)}{\sqrt{2\pi x b}} \geq c \, b^{-1/2},
\]
uniformly on $x\in [a_0,a_1]$ and $b$ small enough, since $x/b\to \infty$ and $R(\cdot)$ is increasing with $\lim_{u\to \infty}R(u) = 1$.
Combining the bounds on $K_b(x,x)$ and $Q_{b,\delta}(x)$ yields the claim.
\end{proof}

\begin{proof}[Proof of Lemma~\ref{lem:nonminimax-bias}]
Fix $\beta\in (0,2]$ and $L > 1$. Let $\psi:\R\to \R$ be the compactly supported $C^2$ function
\[
\psi(u) \leqdef (1 - u^2)^3 \, \ind_{[-1,1]}(u),
\]
so that $\psi(0) = 1$, $0 \leq \psi \leq 1$, and $\psi$ is nonincreasing on $[0,1]$.
Set
\[
L_{\beta} \leqdef \frac{L}{16}, \quad N \leqdef \left\lceil \frac{1}{24 b^{1/2}}\right\rceil, \quad t^{(k)} \leqdef \frac14 + 3 b^{1/2}(2k-1), \quad k = 1,\ldots,2N,
\]
and define the preliminary (non-smooth at $x=1$) test density
\begin{equation}\label{eq:fbeta-def}
f_{\beta,b}(x) \leqdef \ind_{[0,1]}(x) + L_{\beta}(3 b^{1/2})^{\beta} \sum_{k=1}^{2N}(-1)^k \, \psi\left(\frac{x-t^{(k)}}{3 b^{1/2}}\right) \ind_{[0,1]}(x),
\end{equation}
which is illustrated in Figure~\ref{fig:fbeta}.

\begin{figure}[!ht]
\centering
\includegraphics[width=0.93\textwidth]{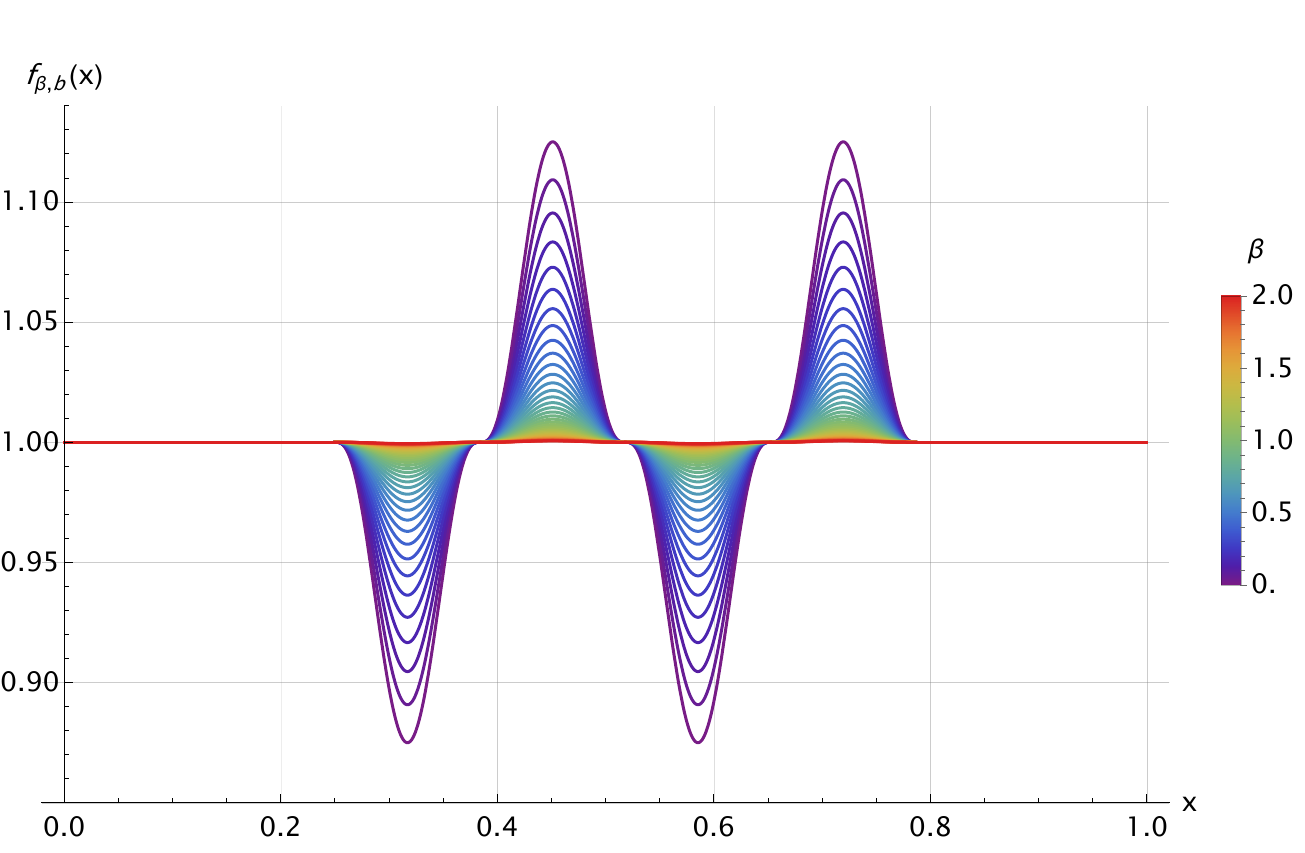}
\caption{Visualization of the preliminary (non-smooth at $x=1$) test density $f_{\beta,b}$ defined in \eqref{eq:fbeta-def} for $L=2$, $b=0.0005$, and various values of $\beta \in (0,2]$. The oscillations represent the localized perturbations used to lower bound the maximal risk.}
\label{fig:fbeta}
\end{figure}

For $b$ small enough, the supports of the translated bumps are disjoint and contained in $[0,7/8]$, and $L_{\beta}(3 b^{1/2})^{\beta} \leq 1/2$, so $f_{\beta,b} \geq 1/2$ on $[0,1]$. Moreover, since there are as many $+1$ as $-1$ signs in the sum, the integral of the bump sum is $0$, hence $\smash{\int_0^1 f_{\beta,b}(x) \rd x = 1}$. Thus $f_{\beta,b}$ is a density supported on $[0,1]$. However, due to the hard cutoff at $x=1$, it does not satisfy the smoothness requirements imposed by the definition of $\Sigma(\beta,L)$.

Let $\widetilde{f}_{\beta,b}$ denote a $C^2$ modification of $f_{\beta,b}$ on $[7/8,1]$ such that:
\begin{itemize}\setlength\itemsep{0em}
\item $\widetilde{f}_{\beta,b}=f_{\beta,b}$ on $[0,7/8]$;
\item $\widetilde{f}_{\beta,b}$ is supported on $[0,1]$ and $\int_0^1 \widetilde{f}_{\beta,b}(x) \, \rd x=1$;
\item $\widetilde{f}_{\beta,b}$ is $C^2$ on $(0,\infty)$, is $C^{\infty}$ in a neighbourhood of $1$, and $\widetilde{f}_{\beta,b}^{(k)}(1)=0$ for all $k\in\N_0$;
\item $\widetilde{f}_{\beta,b}\in \Sigma(\beta,\widetilde{L})$ for an appropriate $\widetilde{L} > 1$.
\end{itemize}
Again, we omit the explicit smoothing, since it does not affect the lower bounds below: all points $x$ and $u$ that appear in the argument lie in $[0,3/4 + 12 b^{1/2}]$, which itself is contained in say $[0,5/6]$ for $b$ small enough, and for such $x$ the kernel mass on $[7/8,1]$ is exponentially small in $1/b$ given that $7/8 - 5/6 > 0$. In particular, the bias lower bound obtained below for $f_{\beta,b}$ carries over to $\widetilde{f}_{\beta,b}$ (possibly after shrinking~$b_1$ and adjusting constants).

\medskip
\emph{Lower bound on the $L^1([0,1])$ bias.}
Let $\Delta_N \leqdef \{k\in \{1,\ldots,2N\}: k \text{ is even}\}$, so that the corresponding bumps are \emph{positive}. Fix $ \e\in (0,1/2]$ (to be chosen later) and define the intervals
\[
T_k(\e,b) \leqdef \left\{x: |x-t^{(k)}| \leq \e b^{1/2}\right\}, \quad
I_k(b) \leqdef \left\{u: b^{1/2} \leq |u-t^{(k)}| \leq 2 b^{1/2}\right\}.
\]
For $k\in \Delta_N$ and $x\in T_k(\e,b)$, only the $k$th bump is active in~\eqref{eq:fbeta-def}, so
\[
f_{\beta,b}(x) = 1 + L_{\beta}(3 b^{1/2})^{\beta} \, \psi\left(\frac{x-t^{(k)}}{3 b^{1/2}}\right),
\]
and similarly for $u\in I_k(b)$. Since $\psi$ is nonincreasing on $[0,1]$, for $x\in T_k(\e,b)$ (so that $|(x-t^{(k)})/(3 b^{1/2})| \leq \e/3$)
and $u\in I_k(b)$ (so that $|(u-t^{(k)})/(3 b^{1/2})|\in [1/3,2/3]$), one has
\begin{equation}\label{eq:f.beta.diff.lower.bound}
f_{\beta,b}(x) - f_{\beta,b}(u)
= L_{\beta}(3 b^{1/2})^{\beta}\left\{\psi\left(\frac{x-t^{(k)}}{3 b^{1/2}}\right) - \psi\left(\frac{u-t^{(k)}}{3 b^{1/2}}\right)\right\}
\geq c \, b^{\beta/2},
\end{equation}
where $c \leqdef L_{\beta} 3^{\beta}\{\psi(\e/3) - \psi(1/3)\} > 0$ for fixed $ \e$.

Now fix $k\in \Delta_N$ and $x\in T_k(\e,b)$. Since $f_{\beta,b}$ is supported on $[0,1]$ and $\int_0^{\infty} K_b(x,u)\rd u = 1$, we have
\[
\EE_{f_{\beta,b}}[\hat{f}_{n,b}(x)] - f_{\beta,b}(x) = \int_0^{\infty} K_b(x,u)\{f_{\beta,b}(u) - f_{\beta,b}(x)\} \rd u,
\]
so we can write
\[
\Big|\EE_{f_{\beta,b}}[\hat{f}_{n,b}(x)] - f_{\beta,b}(x)\Big| \geq U_k(x) - V(x),
\]
where
\[
\begin{aligned}
U_k(x) &\leqdef \int_{I_k(b)} K_b(x,u)\{f_{\beta,b}(x) - f_{\beta,b}(u)\} \rd u, \\
V(x) &\leqdef \int_{\{f_{\beta,b}(u) \geq f_{\beta,b}(x)\}} K_b(x,u)\{f_{\beta,b}(u) - f_{\beta,b}(x)\} \rd u.
\end{aligned}
\]
From the previous pointwise difference lower bound \eqref{eq:f.beta.diff.lower.bound},
\begin{equation}\label{eq:Ak-lower}
U_k(x) \geq c \, b^{\beta/2} \int_{I_k(b)} K_b(x,u) \rd u.
\end{equation}
To bound the integral in \eqref{eq:Ak-lower} from below, note that for $x\in T_k(\e,b)$ and $u\in [t^{(k)} + \frac{3}{2} b^{1/2}, t^{(k)} + 2 b^{1/2}]$, one has $u\in I_k(b)$ and
\[
u - x\in \big[(\tfrac{3}{2}-\e)b^{1/2}, \, (2+\e)b^{1/2}\big] \subseteq [b^{1/2}, (5/2)b^{1/2}],
\]
provided $ \e \leq 1/2$. Moreover, for $b$ small enough, $x\in [1/4,3/4 + 12 b^{1/2}] \subseteq [0,5/6]$. Since for each fixed $x>0$ the function $u\mapsto K_b(x,u)$ is decreasing on $[x,\infty)$ (its mode is at $u=x$), it follows that, for all such $u$,
\[
K_b(x,u) \geq K_b\left(x,x+\frac{5}{2}b^{1/2}\right).
\]
Hence Lemma~\ref{lem:K-local-lower} (with $a_0 = 1/4$, $a_1 = 5/6$ and $\delta = 5/2$) implies
\[
\inf_{u\in [t^{(k)} + \frac{3}{2} b^{1/2}, \, t^{(k)} + 2 b^{1/2}]} K_b(x,u) \geq c \, b^{-1/2},
\]
so
\begin{equation}\label{eq:intK-lower}
\int_{I_k(b)} K_b(x,u) \rd u \geq \int_{t^{(k)} + \frac{3}{2} b^{1/2}}^{t^{(k)} + 2 b^{1/2}} K_b(x,u) \rd u
\geq c \, b^{-1/2}\cdot \frac{b^{1/2}}{2}
\geq c.
\end{equation}
Combining~\eqref{eq:Ak-lower} and~\eqref{eq:intK-lower} yields
\[
U_k(x) \geq c \, b^{\beta/2}.
\]

Next, set $H_b \leqdef L_{\beta}(3 b^{1/2})^{\beta}$. Since $0\leq \psi \leq 1$ and the translated bumps have disjoint supports, one has
\[
\sup_{u\in [0,1]} f_{\beta,b}(u) = 1 + H_b.
\]
Therefore, whenever $f_{\beta,b}(u) \geq f_{\beta,b}(x)$,
\[
0 \leq f_{\beta,b}(u) - f_{\beta,b}(x) \leq (1 + H_b) - f_{\beta,b}(x)
= H_b \, \left\{1 - \psi\left(\frac{x-t^{(k)}}{3 b^{1/2}}\right)\right\}.
\]
Since $\psi(v)=(1-v^2)^3$ for $|v|\leq 1$, one has $1-\psi(v)=3v^2-3v^4+v^6\leq 3v^2$, and thus, for $x\in T_k(\e,b)$,
\[
1 - \psi\left(\frac{x-t^{(k)}}{3 b^{1/2}}\right)
\leq 3\left(\frac{\e}{3}\right)^2 = \frac{\e^2}{3}.
\]
Hence,
\[
0 \leq f_{\beta,b}(u) - f_{\beta,b}(x) \leq \frac{H_b \, \e^2}{3},
\quad \text{whenever } f_{\beta,b}(u) \geq f_{\beta,b}(x),
\]
and therefore
\[
V(x) \leq \frac{H_b \, \e^2}{3} \int_0^1 K_b(x,u) \rd u \leq \frac{H_b \, \e^2}{3} \leq C \e^2 b^{\beta/2}.
\]
Choosing $\e > 0$ small enough so that $C \e^2 \leq c/2$, we obtain
\[
\Big|\EE_{f_{\beta,b}}[\hat{f}_{n,b}(x)] - f_{\beta,b}(x)\Big| \geq \frac{c}{2} b^{\beta/2}, \quad x\in T_k(\e,b), ~k\in \Delta_N.
\]
Integrating over $x$ and summing over $k\in \Delta_N$ gives
\[
\|\EE_{f_{\beta,b}}[\hat{f}_{n,b}] - f_{\beta,b}\|_{L^1([0,1])}
\geq \sum_{k\in \Delta_N} \int_{T_k(\e,b)} \Big|\EE_{f_{\beta,b}}[\hat{f}_{n,b}(x)] - f_{\beta,b}(x)\Big| \rd x
\geq c \, b^{\beta/2}\sum_{k\in \Delta_N} |T_k(\e,b)|.
\]
Since $|T_k(\e,b)| = 2 \e b^{1/2}$ and $|\Delta_N| = N \asymp b^{-1/2}$, the sum is bounded below by a positive constant independent of $b$. This proves $\|\EE_{f_{\beta,b}}[\hat{f}_{n,b}] - f_{\beta,b}\|_{L^1([0,1])} \geq c \, b^{\beta/2}$ for $b$ small enough.

Finally, since $\widetilde{f}_{\beta,b}$ differs from $f_{\beta,b}$ only on $[7/8,1]$ and, for $x\in[1/4,5/6]$, the kernel mass on $[7/8,1]$ is $\OO(\exp(-c/b))$, the same lower bound holds for $\widetilde{f}_{\beta,b}$ after possibly shrinking $b_1$ and adjusting constants. In particular,
\[
\|\EE_{\widetilde{f}_{\beta,b}}[\hat{f}_{n,b}] - \widetilde{f}_{\beta,b}\|_{L^1([0,1])} \geq c \, b^{\beta/2},
\]
and on a set of Lebesgue measure $1$, $\|g\|_p \geq \|g\|_{L^1([0,1])}$ for every $p\in[1,\infty)$ (see \eqref{eq:Lp.L1}), so
\[
R_n(\hat{f}_{n,b},\widetilde{f}_{\beta,b}) \geq \|\EE_{\widetilde{f}_{\beta,b}}[\hat{f}_{n,b}] - \widetilde{f}_{\beta,b}\|_p \geq \|\EE_{\widetilde{f}_{\beta,b}}[\hat{f}_{n,b}] - \widetilde{f}_{\beta,b}\|_{L^1([0,1])} \geq c \, b^{\beta/2}.
\]
This concludes the proof.
\end{proof}

\end{appendices}

\section*{Funding}
\addcontentsline{toc}{section}{Funding}

Fr\'ed\'eric Ouimet is supported by the Natural Sciences and Engineering Research Council of Canada (NSERC) through Discovery Grant RGPIN-2026-04471 and Discovery Launch Supplement DGECR-2026-00449.

\section*{References}
\addcontentsline{toc}{chapter}{References}



\end{document}